\documentclass[12pt,oneside,english,reqno]{amsart}
\usepackage[T1]{fontenc}
\usepackage[latin9]{inputenc}
\usepackage{amsthm}

\makeatletter

\usepackage{mathtools}
\mathtoolsset{showonlyrefs}
\numberwithin{equation}{section}
\numberwithin{figure}{section}

\usepackage{geometry}
\usepackage[svgnames]{xcolor}
\usepackage[bookmarksnumbered=true]{hyperref} 
\hypersetup{
	colorlinks = true,
	linkcolor = Blue,
	anchorcolor = blue,
	citecolor = Green,
	filecolor = blue,
	urlcolor = FireBrick
}

\usepackage{aliascnt}
\usepackage[nameinlink]{cleveref}

\numberwithin{equation}{section}
\numberwithin{figure}{section}

\theoremstyle{plain}
\newtheorem{theorem}{Theorem}[section]

\theoremstyle{definition}
\newaliascnt{definition}{theorem}
\newtheorem{definition}[definition]{Definition}
\aliascntresetthe{definition}
\crefname{definition}{Definition}{Definitions}

\theoremstyle{definition}
\newaliascnt{question}{theorem}

\aliascntresetthe{question}
\crefname{question}{Question}{Questions}

\theoremstyle{definition}
\newaliascnt{remark}{theorem}
\newtheorem{remark}[remark]{Remark}
\aliascntresetthe{remark}
\crefname{remark}{Remark}{Remarks}

\theoremstyle{plain}
\newaliascnt{corollary}{theorem}

\aliascntresetthe{corollary}
\crefname{corollary}{Corollary}{Corollaries}

\theoremstyle{plain}
\newaliascnt{lemma}{theorem}
\newtheorem{lemma}[lemma]{Lemma}
\aliascntresetthe{lemma}
\crefname{lemma}{Lemma}{Lemmas}

\theoremstyle{plain}
\newaliascnt{proposition}{theorem}
\newtheorem{proposition}[proposition]{Proposition}
\aliascntresetthe{proposition}
\crefname{proposition}{Proposition}{Propositions}

\theoremstyle{definition}
\newaliascnt{example}{theorem}
\newtheorem{example}[example]{Example}
\aliascntresetthe{example}
\crefname{example}{Example}{Examples}

\theoremstyle{definition}
\newaliascnt{assumption}{theorem}

\aliascntresetthe{assumption}
\crefname{assumption}{Assumption}{Assumptions}

\theoremstyle{definition}
\newaliascnt{problem}{theorem}

\aliascntresetthe{problem}
\crefname{problem}{Problem}{Problems}

\newtheorem*{definition*}{Definition}
\newtheorem*{proposition*}{Proposition} 
\newtheorem*{remark*}{Remark}
\newtheorem*{example*}{Example}
\newtheorem*{problem*}{Problem}
\newtheorem*{question*}{Question}
\newtheorem*{theorem*}{Theorem}
\newtheorem*{lemma*}{Lemma}
\newtheorem*{corollary*}{Corollary}
\newtheorem*{Acknowledgments*}{Acknowledgments}

\renewenvironment{proof}[1][\proofname]{\medskip \noindent {\bfseries #1. }}{\hfill \qedsymbol\medskip}

\usepackage{bbm}
\usepackage{mathrsfs}
\usepackage{comment}
\usepackage{amsmath}
\usepackage{amssymb}

\DeclareRobustCommand{\SkipTocEntry}[5]{}

\newcommand{\mR}{\mathbb{R}}   
\newcommand{\mZ}{\mathbb{Z}}   
\newcommand{\mN}{\mathbb{N}}   
\newcommand{\mH}{\mathbb{H}}

\newcommand{\abs}[1]{\lvert #1 \rvert}  
\newcommand{\norm}[1]{\lVert #1 \rVert}  
\newcommand{\br}[1]{\langle #1 \rangle}  

\newcommand{\mD}{\mathbb{D}}

\newcommand{\mM}{\mathcal{M}}

\newcommand{\mP}{\mathcal{P}}
\newcommand{\mG}{\mathcal{G}}

\newcommand{\id}{\mathrm{Id}}

\newcommand{\bfi}{\mathrm{i}}

\newcommand{\ehat}{\,\widehat{\rule{0pt}{6pt}}\,}

\newcommand{\p}{\partial}

\DeclareMathOperator{\dist}{dist}
\DeclareMathOperator{\supp}{supp}

\newcommand{\rmd}{\mathrm{d}}

\usepackage{babel}

\newcommand{\Rn}{\mathbb{R}^n}

\renewcommand{\norm}[1]{\lVert #1 \rVert}
\renewcommand{\abs}[1]{\lvert #1 \rvert}

\makeatother

\usepackage{orcidlink}

\usepackage{babel}
\begin{document}
\begin{sloppypar}

\title[Inverse problems for the spectral fractional Laplacian]{Inverse problems for the spectral fractional Laplacian with inhomogeneous Dirichlet boundary data}

\author{Ravi Shankar Jaiswal\,\orcidlink{0009-0003-5845-0485}}
\address{Department of Mathematics, Southern University of Science and Technology, Xueyuan Avenue, Shenzhen, Guangdong, China 518055}
\email{ravi@sustech.edu.cn}
\author{Pu-Zhao Kow\,\orcidlink{0000-0002-2990-3591}}
\address{Department of Mathematical Sciences, National Chengchi University, Taipei 116, Taiwan}
\email{pzkow@g.nccu.edu.tw}

\author{Suman Kumar Sahoo\,\orcidlink{0000-0002-6459-1597}}
\address{Department of Mathematics
IIT Bombay, Powai,
Mumbai, India 400076}
\email{suman@math.iitb.ac.in}

\begin{abstract}
In this paper, we study the spectral fractional Laplacian with inhomogeneous Dirichlet boundary data, following the framework of \cite{APR18SpectralFractionalLaplacian}. Our contributions are twofold: first we introduce a Dirichlet-to-Neumann map for this operator and analyze an associated inverse problem; and second we establish an additional density result for the spectral fractional Laplacian.
\end{abstract}

\keywords{Fractional Laplacian, Dirichlet-to-Neumann (DN) map, Born approximation}
\subjclass[2020]{35R11, 35R30, 31B20, 31B30}

\maketitle

\tableofcontents

\section{Introduction}

The study of the classical inverse problem dates back to the seminal work of Calder{\'o}n \cite{Calderon1980} in $1980$, where he posed the question of whether one can determine the electrical conductivity of a medium from boundary measurements of current and voltage. In the same work, Calder{\'o}n also proved the linearized version of the problem using complex geometrical optics (CGO) solutions.
Since then, substantial developments have been made in the mathematical theory of inverse problems, driven by a wide range of applications, including medical imaging and seismic imaging. We refer the reader to the surveys \cite{Uhl_eip_survey,Uhlmann_survey} and the monograph \cite{FSU25CalderonIntro} for further results in this direction. 

The inverse problem for the fractional Laplace operator is a relatively recent research topic compared to the corresponding problem for the classical (non-fractional) Schr{\"o}dinger operator. Nevertheless, it has attracted significant attention in a short period of time and a substantial body of literature has emerged. We refer to \cite{GSU20Calderon,GRSU20Reconstruction,RS18Instability,RS20Calderon} and the references therein.
A key feature exploited in these works is the nonlocal nature of the fractional Laplace operator, which enjoys a strong unique continuation property (UCP), see, for example, \cite{GSU20Calderon,Rul15unique,Ros16FractionalLaplacianSurvey}. In particular, the work \cite{GSU20Calderon} employed the Runge approximation property (a quantitative form of UCP) to solve the fractional inverse problem. This approach was further extended and developed in the subsequent works, see \cite{BCR25_siam,Cov20FractionalConductivity,KLW21CalderonFractionalWave,CMR20unique,CJTZ_Siam,BTG_transectionsams,FTKG_jdg,TG_revista} and the references therein.

In this paper, we study an inverse problem for the spectral fractional Laplace operator (see \Cref{def:spectral-fractional-laplacian} below), and we provide a solution to its linearized version near zero potential. To this end, let $\Omega\subset \Rn$ ($n\ge 2$) be a bounded domain with smooth boundary.
We consider the following Dirichlet problem for the Schr{\"o}dinger equation involving the spectral fractional Laplacian: 
\begin{equation}
\left( (-\Delta_{D})^{s}-q \right)u=0 \text{ in $\Omega$} ,\quad u|_{\partial\Omega} = g, \label{eq:sch-main}
\end{equation}
assuming that 0 is not a Dirichlet eigenvalue of $\left( (-\Delta_{D,0})^{s} - q \right)$. 

We will later give a precise definition of the spectral fractional Laplacians $(-\Delta_{D})^{s}$ and $(-\Delta_{D,0})^{s}$ in suitable Hilbert spaces. These operators coincide on $C_{c}^{\infty}(\Omega)$, see \cite[Proposition~2.4]{APR18SpectralFractionalLaplacian}. 
The Dirichlet-to-Neumann (DN) map associated with \eqref{eq:sch-main} is defined as 
\begin{equation*}
\Lambda_{q} : H^{s-\frac{1}{2}}(\partial\Omega) \rightarrow H^{\frac{1}{2}}(\partial\Omega), 
\end{equation*}
with the precise definition given in \eqref{eq:s-DN-map-definition}. If $\norm{q}_{L^{\infty}(\Omega)}$ is sufficiently small, the solution $u$ of \eqref{eq:sch-main} can be approximated by the unique solution $\tilde{u}$ of 
\begin{subequations} \label{eq:Born-approximation}
\begin{equation}
(-\Delta_{D,0})^{s}\tilde{u} = qu_{0} \text{ in $\Omega$} ,\quad \tilde{u}|_{\partial\Omega}=0, 
\end{equation}
where $u_{0}$ denotes the unique solution of  
\begin{equation}
(-\Delta)_{D}^{s}u_{0}=0 \text{ in $\Omega$} ,\quad u_{0}|_{\partial\Omega}=g. 
\end{equation}
\end{subequations} 
This is known as the Born approximation, see \Cref{sec:Born-approximation}. 
The DN map associated with \eqref{eq:Born-approximation} defines a bounded linear operator 
\begin{equation}
\rmd \Lambda : L^{\infty}(\Omega) \rightarrow \mathcal{L}(H^{s-\frac{1}{2}}(\partial\Omega),H^{\frac{1}{2}}(\partial\Omega)), \label{eq:linearized-s-DN-map-introduction}
\end{equation}
where $\mathcal{L}(X,Y)$ denotes the space of bounded linear operators from $X$ to $Y$. 
Note that $\rmd \Lambda$ is precisely the Fr{\'e}chet derivative of the non-linear map $q\mapsto \Lambda_{q}^{s}$ at $q=0$, see \Cref{prop:Frechet-derivative-s-DN}. 
The main focus of this paper is the injectivity of the operator \eqref{eq:linearized-s-DN-map-introduction}.

\begin{theorem}\label{thm:main}
Let $n\ge 2$ be an integer, and let $\Omega\subset\mR^{n}$ be a bounded $C^{1,\alpha}$ domain for some $\alpha>\frac{1}{2}$. Fix $\frac{1}{2}<s<1$, and let $q^{(1)},q^{(2)}\in L^{\infty}(\Omega)$ satisfy 
\begin{equation}
\norm{q^{(j)}}_{L^{2}(\Omega)}\le M\quad\text{for all $j=1,2$.} \label{eq:apriori}
\end{equation}
If 
\begin{equation}
(\rmd\Lambda^{s}[q^{(1)}])(g)=(\rmd\Lambda^{s}[q^{(1)}])(g)\quad\text{for all $g\in C^{\infty}(\partial\Omega)$,} \label{eq:DN-map-data}
\end{equation}
then $q^{(1)}\equiv q^{(2)}$. Moreover, there exists a constant $C=C(n,\Omega,M)$ such that 
\[
\norm{\chi_{\Omega}(q^{(1)}-q^{(2)})}_{H^{-1}(\mR^{n})}\le C w(\norm{\rmd\Lambda^{s}[q^{(1)}] - \rmd\Lambda^{s}[q^{(2)}]}_{*}),
\]
where the operator norm $\norm{\cdot}_{*}$ is given by $\norm{\cdot}_{*}=\norm{\cdot}_{H^{\frac{1}{2}}(\partial\Omega)\rightarrow H^{-\frac{1}{2}}(\partial\Omega)}$, and $w$ is a modulus of continuity given by 
\[
w(t) \lesssim \abs{\operatorname{log}t}^{-1}, \quad 0 < t < 1/e.\]
\end{theorem}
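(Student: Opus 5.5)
Set $q := q^{(1)}-q^{(2)}$. The plan is the classical Calder\'on linearization argument: turn the DN data into an integral identity involving products of $s$-harmonic functions, then test that identity with functions that behave like complex exponentials.

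\textbf{Step 1: integral identity.} From the definition \eqref{eq:s-DN-map-definition} and the Born approximation \eqref{eq:Born-approximation}, I would derive an Alessandrini-type identity
\[
\langle (\rmd\Lambda^{s}[q])(g_1), g_2\rangle_{\partial\Omega} = \int_\Omega q\, u_0^{(1)} u_0^{(2)}\,dx,
\]
where $u_0^{(j)}$ solves $(-\Delta)_D^s u_0^{(j)}=0$ in $\Omega$ with boundary value $g_j$. This should follow from the Green-type formula for $(-\Delta_D)^s$ in \cite{APR18SpectralFractionalLaplacian}, tested against $\tilde u$ and $u_0^{(2)}$. It also gives the bound
\[
\Bigl|\int_\Omega q\,u_0^{(1)}u_0^{(2)}\Bigr|\le \norm{\rmd\Lambda^s[q^{(1)}]-\rmd\Lambda^s[q^{(2)}]}_*\,\norm{g_1}_{H^{1/2}}\norm{g_2}_{H^{1/2}}.
\]

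\textbf{Step 2: choice of solutions.} I would use the structure of the spectral fractional Laplacian to produce $s$-harmonic functions. For $h$ harmonic in $\Omega$, $(-\Delta_D)^s h$ should again be representable through the boundary operator. In the APR18 framework, the $s$-harmonic functions with trace $g$ are $u_0 = c\,\partial_\nu$-type lifts of the form $\sum_k \lambda_k^{-s}\langle g, -\partial_\nu\phi_k\rangle \phi_k$, which are not themselves harmonic.

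I would therefore aim for the following density/relation result. Let $h$ be harmonic with trace $g$, and let $u_0$ be the $s$-harmonic function with the same trace. Then $u_0 - h = w$, where $w$ solves a Dirichlet problem $(-\Delta_{D,0})^{s}w = F(h)$ with $w|_{\partial\Omega}=0$. Here $F(h)$ is controlled in a weighted norm. This step is the one I expect to be the main obstacle, so I would try to avoid it as follows. Instead of harmonic functions, I would use the paper's density result: products $u_0^{(1)}u_0^{(2)}$ of $s$-harmonic functions approximate $e^{\bfi \xi\cdot x}$-type products with quantitative control. Concretely, take the Calder\'on exponentials $h_{1,2}=e^{(\eta\pm \bfi\xi)\cdot x}$ with $\eta\perp\xi$ and $|\eta|=|\xi|/2$. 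Their product is $e^{\bfi\xi\cdot x}$. Then take
\[
u_0^{(j)} = (-\Delta_{D})^{-s}\text{-corrected versions}, \qquad u_0^{(j)} = h_j - w_j .
\]
The corrections are estimated by $\norm{w_j}_{L^2}\lesssim e^{C|\xi|}$ times a smallness factor coming from the relation between $(-\Delta_D)^s h$ and the boundary data. If no smallness factor is available, I would instead argue by density that $\mathrm{span}\{u_0^{(1)}u_0^{(2)}\}$ is dense in $L^1(\Omega)$, which gives uniqueness. For stability I would still need quantitative approximation of $e^{\bfi\xi\cdot x}$ with cost $e^{C|\xi|}$.

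\textbf{Step 3: stability.} With $\varepsilon=\norm{\rmd\Lambda^s[q^{(1)}]-\rmd\Lambda^s[q^{(2)}]}_*$, Steps 1–2 should give
\[
|\widehat{\chi_\Omega q}(\xi)|\le e^{C|\xi|}\varepsilon + (\text{approximation error}),
\]
where the approximation error is bounded in terms of $M$. Split the frequency space into $|\xi|\le R$ and $|\xi|>R$. On the high frequencies,
\[
\int_{|\xi|>R}|\widehat{\chi_\Omega q}|^2(1+|\xi|^2)^{-1}\,d\xi\le M^2R^{-2}.
\]
On the low frequencies, the contribution is bounded by $R^{n}e^{2CR}\varepsilon^2$. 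Choosing $R=c|\log\varepsilon|$ gives
\[
\norm{\chi_\Omega q}_{H^{-1}}\lesssim |\log \varepsilon|^{-1}.
\]
Uniqueness follows by letting $\varepsilon=0$.
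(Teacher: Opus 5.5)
\textbf{There is a genuine gap in Step 2.} Your Steps 1 and 3 match the paper. Step 2, however, rests on a false premise: in this framework, $s$-harmonic functions \emph{are} harmonic.

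Indeed, let $v$ be harmonic in $\Omega$ and smooth up to $\partial\Omega$, with $v|_{\partial\Omega}=g$. Green's second identity, together with $\phi_k|_{\partial\Omega}=0$ and $-\Delta\phi_k=\lambda_k\phi_k$, gives
\[
\lambda_k\langle v,\phi_k\rangle_{\Omega}=-\int_{\Omega}v\,\Delta\phi_k\,dx=-\int_{\partial\Omega}g\,\partial_\nu\phi_k\,dS .
\]
So every coefficient $\langle v,\phi_k\rangle_\Omega+\lambda_k^{-1}\langle v,\partial_\nu\phi_k\rangle_{\partial\Omega}$ in Definition~\ref{def:spectral-fractional-laplacian} vanishes. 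Hence $(-\Delta_D)^s v=0$, and by uniqueness in Lemma~\ref{lem:well-posedness}, $v=\mathcal{P}^s g$. This is exactly the identity $\mathcal{P}g=\mathcal{P}^{s}g$ in \eqref{eq:Poisson-equivalence}, on which the paper's proof rests.

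The condition is independent of $s$, so your claimed expansion with coefficients $\lambda_k^{-s}\langle g,-\partial_\nu\phi_k\rangle$ is wrong. The correct coefficients, $-\lambda_k^{-1}\langle g,\partial_\nu\phi_k\rangle_{\partial\Omega}$, are those of the harmonic extension. Consequently your correction $w_j$ is identically zero ($F(h)=0$). The Calder\'on exponentials can be inserted directly into the Alessandrini identity. This gives $\widehat{\chi_\Omega q}(\xi)=0$ exactly for uniqueness. For stability it gives $|\widehat{\chi_\Omega q}(\xi)|\lesssim (1+|\xi|)^2e^{R_0|\xi|}\varepsilon$, where $\Omega\subset B(0,R_0)$, with no approximation error. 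Your Step 3 then goes through as in the paper.

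Without this fact, your Step 2 does not close:
\begin{itemize}
\item No smallness factor for $w_j$ is identified.
\item The $L^1$-density of products of $s$-harmonic functions is asserted but not argued.
\item The paper's density result (Theorem~\ref{th_desnity}) concerns a different identity: biharmonic $u$ paired against $s$-harmonic $v$, with tensor coefficients. It gives no quantitative approximation of $e^{\mathrm{i}\xi\cdot x}$ at cost $e^{C|\xi|}$.
\item An approximation error that is merely bounded in terms of $M$ would also break Step 3, because the low-frequency contribution would no longer tend to zero as $\varepsilon\to 0$.
\end{itemize}

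Finally, fix your exponentials. With $|\eta|=|\xi|/2$, the functions $e^{(\eta\pm\mathrm{i}\xi)\cdot x}$ are not harmonic, and their product is $e^{2\eta\cdot x}$. Take instead $\alpha=\tfrac12(\eta+\mathrm{i}\xi)$ and $\beta=\tfrac12(-\eta+\mathrm{i}\xi)$ with $\eta\perp\xi$ and $|\eta|=|\xi|$. Then $\alpha\cdot\alpha=\beta\cdot\beta=0$ and $e^{\alpha\cdot x}e^{\beta\cdot x}=e^{\mathrm{i}\xi\cdot x}$.
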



\begin{remark} 
Another variant of the Dirichlet problem of the Schr\"{o}dinger equation involves the Fourier fractional Laplacian $(-\Delta_{F})^{s}$: 
\begin{equation*}
\left( (-\Delta_{F})^{s} - q \right) u = 0 \text{ in $\Omega$} ,\quad u|_{\mR^{n}\setminus\overline{\Omega}} = g, 
\end{equation*}
which is clearly different from \eqref{eq:sch-main}. 
Given any open sets $W_{1},W_{2} \subset \mR^{n}\setminus\overline{\Omega}$, the reconstruction of $q$ from the exterior data $(\tilde{u}|_{W_{1}},(-\Delta_{F})^{s}\tilde{u}|_{W_{2}})$ has been extensively studied, see, for example, \cite{GSU20Calderon,GRSU20Reconstruction,RS20Calderon}. 
In a recent paper \cite{TG_revista}, Ghosh further showed that $q$ can also be uniquely determined from the data $\left( \tilde{u}|_{W_{1}},\frac{\tilde{u}}{\dist\,(\cdot,\partial\Omega)^{s}}|_{\Sigma} \right)$, where $\Sigma$ is a non-empty open subset of $\partial\Omega$. 
\end{remark} 

\begin{remark} 
Another variant of the Dirichlet problem was considered in \cite{AD17SpectralFractionalLaplacian}. There, the authors study 
\begin{equation}
(-\Delta_{D,0})^{s} = \mu \text{ in $\Omega$} ,\quad \left. \frac{u}{h_{1}} \right|_{\partial\Omega} = g, \label{eq:Hadamard}
\end{equation}
where $h_{1}$ is a reference function that is bounded above and below by constant multiples of $\dist \, (\cdot,\partial\Omega)$. This formulation is also different from \eqref{eq:sch-main}. Under some assumptions, the problem \eqref{eq:Hadamard} is well-posed in the sense of Hadamard. 
\end{remark}

We now state our second main result, up to the natural gauge invariance. To this end, we first discuss the corresponding gauge class. Let $w\in C^{\infty}(\overline{\Omega})$ satisfy $w=\partial_{\nu}w=0$ on $\partial\Omega$. Define 
\begin{equation}
\theta^{2}=w\id_{n} ,\quad \theta^{1}=2\nabla w \quad \text{and} \quad \theta^{0}=\Delta w. \label{eq:natural-gauge} 
\end{equation}
Then, for any sufficiently smooth function $u$, one can write 
\begin{equation}
\Delta(wu) = \theta^{2}:\nabla^{\otimes 2}u + \theta^{1}\cdot\nabla u + \theta^{0}u, \label{eq:product-rule}
\end{equation}
where $\id_{n}$ is the $n\times n$ identity matrix, $\nabla^{\otimes 2}u$ is the Hessian matrix of $u$ and we use the convention 
\begin{equation*}
A:B = \sum_{i,j=1}^{n}A_{ij}B_{ij} \quad \text{for matrices $A=(A_{ij})$ and $B=(B_{ij})$.}
\end{equation*}
We also adopt the notations $(u\otimes v)_{ij}=u_{i}v_{j}$ and $\delta_{ij}=(I_{n})_{ij}$. For any symmetric matrix $A$, note that 
\begin{equation*}
A:(u\otimes u) = u^{\intercal}Au=Au\cdot u 
\end{equation*}
for any vector $u$. 
Using \eqref{eq:product-rule}, one can easily check that 
\begin{equation}
\int_{\Omega} (\theta^{2}:\nabla^{\otimes 2}u + \theta^{1}\cdot\nabla u + \theta^{0}u)v\,\rmd x = 0 \label{eq:integral-identity}
\end{equation}
for all harmonic function $v\in C^{2}(\overline{\Omega})$. Our next result concerns the recovery of the coefficients $\theta^{2},\theta^{1}$ and $\theta^{0}$ up to the natural gauge \eqref{eq:natural-gauge}:


\begin{theorem}\label{th_desnity}
Let $n\ge 3$, $0<s<1$, and $\Omega$ be a smooth bounded domain. If $\theta^{2}\in(C^{\infty}(\overline{\Omega}))^{n\times n}$, $\theta^{1}\in(C^{\infty}(\overline{\Omega}))^{n}$ and $\theta^{0}\in C^{\infty}(\overline{\Omega})$ satisfy
\begin{equation*}
\theta^{j}=\partial_{\nu}^{k}\theta^{j}=0 \text{ on $\partial\Omega$} \quad \text{for all $j=0,1,2$ and for all $k\le 11$.}
\end{equation*}
If \eqref{eq:integral-identity} holds for all $u\in C^{\infty}(\overline{\Omega})$ and $v\in C^{\infty}(\overline{\Omega})$ such that 
\begin{equation*}
\Delta^{2}u = 0 \quad \text{and} \quad (-\Delta_{D})^s v=0 \quad \text{in $\Omega$,} 
\end{equation*}
then we obtain the exact gauge \eqref{eq:natural-gauge} for some $w\in C^{\infty}(\overline{\Omega})$ such that $w=\partial_{\nu}w=0$ on $\partial\Omega$.  
In the case where $n=2$, if we additionally assume that $\theta^0=0$ and ${\rm tr}\,(\theta^2)=0$, then we conclude $\theta^{1}=0$ and $\theta^{2}=0$. 
\end{theorem}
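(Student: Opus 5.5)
Since $(-\Delta_{D})^{s}v=0$ with inhomogeneous boundary data $g$ has (following \cite{APR18SpectralFractionalLaplacian}) a solution that is \emph{harmonic} in $\Omega$ with trace $g$, I will reduce \Cref{th_desnity} to the classical statement for harmonic $v$. Concretely, I would first verify (from the definition of $(-\Delta_D)^s$ via the very weak/spectral formulation) that for $g\in C^{\infty}(\partial\Omega)$ the unique solution of $(-\Delta_D)^s v=0$, $v|_{\partial\Omega}=g$, coincides with the harmonic extension of $g$. If the solution set is not exactly the harmonic functions, I will at least show it is dense in harmonic functions in $C^{\infty}(\overline{\Omega})$ restricted to supports of $\theta^j$. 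The vanishing of $\theta^j$ to order $11$ at $\partial\Omega$ lets me integrate by parts freely with no boundary terms. So \eqref{eq:integral-identity} holds for all biharmonic $u$ and all harmonic $v$ smooth up to the boundary.

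Next I move all derivatives onto $\theta$. Integrating by parts gives
\[
\int_\Omega F\, u\,\rmd x=0,\qquad F:=\big(\nabla^{\otimes 2}:(\theta^{2}v)-\nabla\cdot(\theta^{1}v)+\theta^{0}v\big),
\]
for every biharmonic $u$. I then use CGO/exponential solutions. For $n\ge3$, take $\zeta,\eta\in\mathbb{C}^n$ with $\zeta\cdot\zeta=\eta\cdot\eta=0$. Then $u=e^{\bfi x\cdot\zeta}$ and $x_k e^{\bfi x\cdot\zeta}$ are biharmonic, $v=e^{\bfi x\cdot\eta}$ is harmonic, and $\zeta+\eta=\xi$ can be an arbitrary real frequency with a free parameter direction. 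Testing with $u=e^{\bfi x\cdot\zeta}$ gives
\[
\big(-\widehat{\theta^{2}}(\xi):(\zeta\otimes\zeta)+\bfi\widehat{\theta^{1}}(\xi)\cdot\zeta+\widehat{\theta^{0}}(\xi)\big)=0
\]
for all admissible decompositions (extending $\theta^j$ by zero). Varying $\zeta$ on the complex variety $\{\zeta\cdot\zeta=0,(\xi-\zeta)\cdot(\xi-\zeta)=0\}$, which has enough dimension when $n\ge3$ to span the relevant polynomial space, forces a polynomial identity in $\zeta$. This should yield $\widehat{\theta^2}(\xi)=\hat w(\xi)\id_n$ modulo terms killed by the constraints, $\widehat{\theta^1}=2\bfi\xi\hat w$, and $\widehat{\theta^0}=-|\xi|^2\hat w$, for a scalar $\hat w$. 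The extra solutions $x_k e^{\bfi x\cdot\zeta}$ (genuinely biharmonic, not harmonic) give the additional equations needed to pin $\theta^2$ to be a multiple of the identity. Inverting the Fourier transform gives $w$ with compact support in $\overline{\Omega}$, hence smooth with $w=\partial_\nu w=0$ on $\partial\Omega$, and \eqref{eq:natural-gauge} holds.

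The main obstacle is this algebraic step: showing the quadratic form in $\zeta$ on the variety determines $\theta^2$ up to trace and then matching the lower order terms consistently with a single $w$. I would use the decomposition $\zeta=\xi/2+\bfi\rho\,e_1+\tau e_2$-type parametrizations with two orthogonal unit vectors perpendicular to $\xi$, available only when $n\ge3$.

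For $n=2$ the variety degenerates: only $\zeta\in\{\xi/2\pm \bfi\xi^\perp/2\}$-type choices occur. The hypotheses $\theta^0=0$ and $\operatorname{tr}\theta^2=0$ remove the gauge, since the gauge would need $\Delta w=0$ with $w$ compactly supported, so $w=0$. The remaining two-parameter identities from $e^{\bfi x\cdot\zeta}$ and $x_ke^{\bfi x\cdot\zeta}$, together with analyticity of the Fourier transforms in $\xi$, should force $\widehat{\theta^1}=0$ and $\widehat{\theta^2}=0$.
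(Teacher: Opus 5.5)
Your proposal has a genuine gap, and it sits exactly at the step you flag as the main obstacle. For $n\ge 3$ that step cannot be carried out, because the asserted conclusion is false.

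Take any $\psi\in C_c^\infty(\Omega)$ and put $\theta^2=\nabla^{\otimes 2}\psi$, $\theta^1=\nabla\Delta\psi$, $\theta^0=\tfrac14\Delta^2\psi$. Integrating by parts gives, for all smooth $u,v$,
\[
\int_\Omega\Big(\nabla^{\otimes 2}\psi:\nabla^{\otimes 2}u+\nabla\Delta\psi\cdot\nabla u+\tfrac14\Delta^2\psi\,u\Big)v\,\rmd x=\frac14\int_\Omega\psi\,\big(v\,\Delta^2u-2\,\Delta u\,\Delta v+u\,\Delta^2v\big)\,\rmd x .
\]
(Equivalently, test on $e^{\bfi x\cdot\zeta}$, $e^{\bfi x\cdot\eta}$: both sides carry the factor $\tfrac14(\zeta\cdot\zeta-\eta\cdot\eta)^2$.) The right-hand side vanishes whenever $\Delta^2u=0$ and $\Delta v=0$. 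So these coefficients satisfy every hypothesis, yet $\nabla^{\otimes 2}\psi=w\id_n$ forces $\partial_1\partial_2\psi\equiv 0$, hence $\psi\equiv 0$ by compact support. Independently, an antisymmetric part of $\theta^2$ is invisible, so symmetry of $\theta^2$ must be assumed.

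In your Fourier language, this triple contributes $\hat\psi(\xi)\,\ell_\xi(\zeta)^2$, where $\ell_\xi$ is the affine function with $V_\xi=\{\zeta\cdot\zeta=0,\ \ell_\xi(\zeta)=0\}$. It vanishes to second order on $V_\xi$, so neither $e^{\bfi x\cdot\zeta}$ nor $x_ke^{\bfi x\cdot\zeta}$ can detect it.

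Concretely, the harmonic tests only show that your quadratic polynomial in $\zeta$ equals $\alpha\,\zeta\cdot\zeta+\ell_\xi(\zeta)(\beta\cdot\zeta+\gamma)$, with free scalars $\alpha,\gamma\in\mathbb{C}$ and a free vector $\beta$. So $\hat\theta^1$ and $\hat\theta^0$ are not yet tied to a single $\hat w$, as you assert. The $x_ke^{\bfi x\cdot\zeta}$ tests then only force $\beta=c\,\xi$ and fix $\gamma$ in terms of $\alpha$ and $c$. The two surviving scalars $\alpha,c$ are, up to conventions, $\hat w$ and $\hat\psi$ for the two-parameter gauge
\[
\big(w\id_n+\nabla^{\otimes 2}\psi,\ 2\nabla w+\nabla\Delta\psi,\ \Delta w+\tfrac14\Delta^2\psi\big).
\]
That, not \eqref{eq:natural-gauge}, is what your plain-exponential method can establish. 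Even then you would need a Paley--Wiener division argument to see that $\alpha$ and $c$ are transforms of compactly supported functions, and your sketch omits this.

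The paper uses a different route (semiclassical CGOs, transport equations, tensor tomography), but its argument breaks at the corresponding point. In the example above, \eqref{2-tensor-gauge} and \eqref{1-tensor-gauge} hold with $w=0$ and $\varphi=\Delta\psi$, which contradicts the paper's claim $\varphi\equiv 2w$. With $a_0\equiv 0$ and $Ta_1=1$, all $\psi$-terms displayed in \eqref{eq:gauge-proof3} vanish. However, the term $\theta^2:((e_1+\bfi\eta_2)\otimes(e_1+\bfi\eta_2))\,a_1b_1$ of the $O(1)$ expansion contributes a nonzero multiple of $\int\psi\,\Delta b_0$. Keeping it gives $\varphi=2w+\Delta\psi$, after which \eqref{eq:gauge-proof2} holds identically and yields nothing about $\psi$.

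For $n=2$ the extra hypotheses do exclude this family: $\theta^0=0$ forces $w=-\tfrac14\Delta\psi$, and then $\operatorname{tr}\theta^2=\tfrac12\Delta\psi$. The $n=2$ claim appears to be correct, but your sketch (``should force'') does not prove it. Your remark about $\Delta w=0$ only disposes of the $w$-family. You still have to actually solve the equations at the two points of $V_\xi$, for instance in complex notation using the biharmonic functions $\bar z F(z)$ and $z H(\bar z)$.
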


\begin{remark}
The above discussion shows that our result is sharp for $n\ge 3$, however, we are unable to identify the exact gauge \eqref{eq:natural-gauge} in two dimensions.
It should therefore be noted that if one replaces the condition $
\Delta v=0$ by $\Delta^{2}v=0$, then it was shown in \cite[Theorem~2.1]{SS_linearized} that $\theta^{2}=0$, $\theta^{1}=0$ and $\theta^{0}=0$. In other words, \Cref{th_desnity} can be regarded as an extension of \cite[Theorem~2.1]{SS_linearized}. 
\end{remark}

\begin{remark}
We emphasize that the condition $(-\Delta_{D})^{s}v=0$ for any $0<s<1$ is essentially equivalent to $\Delta v=0$, see \eqref{eq:Poisson-equivalence} below. One might also ask whether condition $(-\Delta_{D})^{s}u=0$ is replaced by a condition involving $(-\Delta_{D})^{\gamma}$ with exponent $1<\gamma<2$. However, there appears to be no natural definition of $(-\Delta_{D})^{\gamma}$ in this range with an inhomogeneous boundary condition, see \Cref{def:extension-high-order} below. 
\end{remark}



\section{\label{sec:Preliminaries}Preliminaries}

\addtocontents{toc}{\SkipTocEntry}
\subsection{Fractional Sobolev space}

We first introduce some well-known fractional Sobolev space following \cite{APR18SpectralFractionalLaplacian,KK22LewyStampacchia}, see also \cite{CS16FractionalLaplacian,LM72FractionalSobolev1,McL00EllipticSystems,Mik11EllipticSystems,NOS15FractionalDiffusion}. 

Let $\Omega$ be a bounded Lipschitz domain in $\mR^{n}$. For $0<s<1$, let $H^{s}(\Omega)$ be the fractional Sobolev space equipped with the norm 
\[
\norm{\cdot}_{H^{s}(\Omega)}^{2}:=\norm{\cdot}_{L^{2}(\Omega)}^{2}+[\cdot]_{H^{s}(\Omega)}^{2},
\]
where the Gagliardo seminorm $[\cdot]_{H^{s}(\Omega)}$ is defined by 
\[
[v]_{H^{s}(\Omega)}^{2} := \int_{\Omega} \int_{\Omega} \frac{\abs{v(x)-v(z)}^{2}}{\abs{x-z}^{n+2s}}\,\rmd x \,\rmd z.
\]
For $0<s<1$ with $s\neq\frac{1}{2}$, we define $H_{0}^{s}(\Omega)$ be the completion of $C_{c}^{\infty}(\Omega)$ with respect to $\|\cdot\|_{H^{s}(\Omega)}$. When $s=\frac{1}{2}$, the Lions-Magenes space\footnote{Some authors use the notation $H_{00}^{\frac{1}{2}}(\Omega)$ to represent the Lions-Magenes spacce.} $H_{0}^{\frac{1}{2}}(\Omega)$ is defined by 
\[
H_{0}^{\frac{1}{2}}(\Omega):= \left\{ \begin{array}{l|l} v\in H^{\frac{1}{2}}(\Omega) & {\displaystyle \int_{\Omega}\frac{\abs{v(x)}^{2}}{\dist\,(x,\partial\Omega)}\,\rmd x<\infty}\end{array} \right\},
\]
equipped with the norm 
\[
\norm{v}_{H_{0}^{\frac{1}{2}}(\Omega)}^{2}:=\norm{v}_{H^{\frac{1}{2}}(\Omega)}^{2}+\int_{\Omega}\frac{\abs{v(x)}^{2}}{\dist\,(x,\partial\Omega)}\,\rmd x.
\]
In particular, we have 
\begin{align*}
H_{0}^{s}(\Omega) & =H^{s}(\Omega)\quad\text{for all $0<s<\frac{1}{2}$,}\\
H_{0}^{s}(\Omega) & \subsetneq H^{s}(\Omega)\quad\text{for all $\frac{1}{2}\le s<1$. }
\end{align*}

Let $H^{-s}(\Omega)$ be the dual space of $H_{0}^{s}(\Omega)$. It is well-known that there exists a sequence of $H_{0}^{1}(\Omega)$-eigenvalues $\{\lambda_{k}\}_{k=0}^{\infty}$ of the Dirichlet Laplacian $-\Delta$, with corresponding eigenfunctions $\{\phi_{k}\}_{k=0}^{\infty}\subset H_{0}^{1}(\Omega)\cap C^{\infty}(\Omega)$. Moreover, the eigenfunctions $\{\phi_{k}\}_{k=0}^{\infty}$ form an orthonormal basis of $L^{2}(\Omega)$. Accordingly, for each $\gamma\in\mR$, we can define the following fractional-order Sobolev space 
\[
\mH^{\gamma}(\Omega):= \left\{ \begin{array}{l|l} v={\displaystyle \sum_{k=0}^{\infty}}v_{k}\varphi_{k}\in L^{2}(\Omega) & \norm{v}_{\mH^{\gamma}(\Omega)}^{2}={\displaystyle \sum_{k=0}^{\infty}\lambda_{k}^{\gamma}}\abs{\br{v,\varphi_{k}}_{\Omega}}^{2}<\infty\end{array} \right\}.
\]
It is well-known that 
\begin{equation}
\mH^{s}(\Omega)\equiv H_{0}^{s}(\Omega)\quad\text{for all $0<s<1$}\quad\text{(with equivalent norms)},\label{eq:norm-equivalence1}
\end{equation}
see, e.g., \cite{APR18SpectralFractionalLaplacian,MN14FractionalLaplacian1,MN14FractionalLaplacian2}.

\addtocontents{toc}{\SkipTocEntry}
\subsection{Spectral fractional Laplacian}\label{sec:spectral laplacian}

We now define the (homogeneous) spectral fractional Laplacian $(-\Delta_{D,0})^{s}:\mH^{2s}(\Omega)\rightarrow L^{2}(\Omega)$ by 
\[
(-\Delta_{D,0})^{s}v:=\sum_{k=1}^{\infty}\lambda_{k}^{s} \br{v,\phi_{k}}_{\Omega}\phi_{k}\quad\text{for all $v\in\mathbb{H}^{2s}(\Omega)$,}
\]
where $\br{\cdot,\cdot}_{\Omega}$ is the $L^2$ inner product in $\Omega$, and it is easy to see that 
\begin{equation}
\norm{(-\Delta_{D,0})^{s}\cdot}_{L^{2}(\Omega)} \equiv \norm{\cdot}_{\mH^{2s}(\Omega)}.\label{eq:norm-equivalence2}
\end{equation}
In particular, $(-\Delta_{D,0})^{s}:\mH^{s}(\Omega)\rightarrow\mH^{-s}(\Omega)$ is also a bounded linear operator. We now define 
\[
-\Delta_{D}v := \sum_{k=1}^{\infty} \lambda_{k} \left(\br{v,\phi_{k}}_{\Omega}+\lambda_{k}^{-1}\br{v,\partial_{\nu}\phi_{k}}_{\partial\Omega}\right)\phi_{k}\quad\text{for all $v\in H^{1}(\Omega)$,}
\]
where $\br{\cdot,\cdot}_{\partial\Omega}$ is the distribution pairing in $\partial\Omega$. Using \cite[Proposition~2.3]{APR18SpectralFractionalLaplacian}, for each $v\in C^{\infty}(\overline{\Omega})$ we know that $-\Delta_{D}v=-\Delta v$ a.e. in $\Omega$. We now introduce the spectral fractional Laplacian with inhomogeneous Dirichlet boundary data as in \cite[Definition~2.3]{APR18SpectralFractionalLaplacian}. 

\begin{definition}\label{def:spectral-fractional-laplacian} 
We define the (inhomogeneous Dirichlet) \emph{spectral fractional Laplacian} by 
\[
(-\Delta_{D})^{s}v := \sum_{k=1}^{\infty}\lambda_{k}^{s} \left(\br{v,\phi_{k}}_{\Omega}+\lambda_{k}^{-1}\br{v,\partial_{\nu}\phi_{k}}_{\partial\Omega}\right)\phi_{k}\quad\text{for all $v\in\mathbb{D}^{2s}(\Omega)$,}
\]
so that $(-\Delta_{D})^{s}:\mD^{2s}(\Omega)\rightarrow L^{2}(\Omega)$ is a linear bounded operator, where $\mD^{\gamma}(\Omega)$ is given by 
\[
\mD^{\gamma}(\Omega):= \left\{ \begin{array}{l|l} v\in L^{2}(\Omega) & {\displaystyle \sum_{k=1}^{\infty}\lambda_{k}^{\gamma}\left(\br{v,\phi_{k}}_{\Omega}+\lambda_{k}^{-1}\br{v,\partial_{\nu}\phi_{k}}_{\partial\Omega}\right)^{2}<\infty}\end{array} \right\}
\]
\end{definition}

\begin{remark}
In the paragraph following \cite[Definition~2.3]{APR18SpectralFractionalLaplacian}, it is further shown that the operator $(-\Delta_{D})^{s}$ can be extended to an operator mapping from $\mD^{s}(\Omega)$ to $\mH^{-s}(\Omega)$. 
\end{remark}

\begin{remark}\label{def:extension-high-order}
If $u\in C^{\infty}(\overline{\Omega})$ satisfies $\Delta u\in \mD^{2s}(\Omega)$, then $(-\Delta_{D})^{s}(-\Delta u)\in L^{2}(\Omega)$ and 
\begin{equation*}
\begin{aligned}
& (-\Delta_{D})^{s}(-\Delta u) = \sum_{k=1}^{\infty} \left(-\lambda_{k}^{s}\int_{\Omega}\Delta u\phi_{k}\,\rmd x - \lambda_{k}^{s-1}\int_{\partial\Omega}\Delta u \partial_{\nu}\phi_{k}\,\rmd S \right) \phi_{k} \\ 
&\quad = \sum_{k=1}^{\infty} \left( \lambda_{k}^{s} \left( \lambda_{k}\int_{\Omega}\phi_{k}u\,\rmd x + \int_{\partial\Omega} u\partial_{\nu}\phi_{k}\,\rmd S\right) - \lambda_{k}^{s-1}\int_{\partial\Omega}\Delta u\partial_{\nu}\phi_{k}\,\rmd S\right) \phi_{k} \\ 
& \quad = \sum_{k=1}^{\infty} \lambda_{k}^{s+1} \left( \br{u,\phi_{k}}_{\Omega} + \lambda_{k}^{-1}\br{u,\partial_{\nu}\phi_{k}}_{\partial\Omega} - \lambda_{k}^{-2}\br{\Delta u,\partial_{\nu}\phi_{k}}_{\partial\Omega} \right) \phi_{k}. 
\end{aligned}
\end{equation*}
On the other hand, if $u\in \mD^{2s+2}(\Omega)$, then 
\begin{equation*}
(-\Delta)(-\Delta_{D})^{s}u = \sum_{k=1}^{\infty}\lambda_{k}^{s+1}\left(\br{u,\phi_{k}}_{\Omega} + \lambda_{k}^{-1}\br{u,\partial_{\nu}\phi_{k}}_{\partial\Omega} +\lambda_k^{-(s+1)}\br{ (-\Delta_D)^s u,\partial_{\nu}\phi_{k}}_{\partial\Omega}  \right)\phi_{k}. 
\end{equation*}
Consequently, it does not seem natural to extend the operator $(-\Delta_{D})^{\gamma}$ to exponents $\gamma>1$ under inhomogeneous boundary conditions. 
\end{remark}

\addtocontents{toc}{\SkipTocEntry}
\subsection{Traces and integration by parts}

We now assume that $\partial\Omega\in C^{1,\alpha}$ for some $\alpha>\frac{1}{2}$. Using \cite[Lemma~3.1]{APR18SpectralFractionalLaplacian} or \cite[Lemma~6.3]{GM11SelfAdjointLaplacian}, the Neumann trace operator 
\begin{equation}
\partial_{\nu}:H_{0}^{1}(\Omega)\cap H^{2}(\Omega)\rightarrow H^{\frac{1}{2}}(\partial\Omega) \label{eq:Neumann-trace-operator}
\end{equation}
is a well-defined linear bounded surjective operator with a linear bounded right inverse. In addition, we have 
\begin{equation}
\ker(\partial_{\nu})=H_{0}^{2}(\Omega).\label{eq:ker-normal-der}
\end{equation}
From \cite[(8.10) and Definition~8.9]{GM11SelfAdjointLaplacian}, we know that the following integration by parts formula is a special case of \cite[Theorem~3.1]{APR18SpectralFractionalLaplacian}. 

\begin{lemma} \label{lem:integration-by-parts}
Let $\Omega$ be a bounded $C^{1,\alpha}$ domain for some $\alpha>\frac{1}{2}$, and let $0<s<1$. Given any $u\in\mD^{2s}(\Omega)$ with $u|_{\partial\Omega}\in H^{-\frac{1}{2}}(\partial\Omega)$ and $v\in\mH^{2s}(\Omega)$, we have the following integration by parts formula: 
\[
\br{u,\partial_{\nu}w_{v}}_{\partial\Omega} = \left((-\Delta_{D})^{s}u,v\right)_{L^{2}(\Omega)}-\left(u,(-\Delta_{D,0})^{s}v\right)_{L^{2}(\Omega)},
\]
where $w_{v}\in\mH^{2}(\Omega)$ is the unique solution of $(-\Delta_{D,0})^{1-s}w_{v}=v$ in $\Omega$. In this case, $\br{\cdot,\cdot}_{\partial\Omega}$ is simply the $H^{-\frac{1}{2}}(\partial\Omega)\times H^{\frac{1}{2}}(\partial\Omega)$ duality pair. 
\end{lemma}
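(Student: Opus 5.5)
The plan is to verify the identity directly from the spectral expansions, with no appeal to the extension method. Write $u_{k}:=\br{u,\phi_{k}}_{\Omega}$ and $v_{k}:=\br{v,\phi_{k}}_{\Omega}$. Since $v\in\mH^{2s}(\Omega)$, we have $\sum_{k}\lambda_{k}^{2s}\abs{v_{k}}^{2}<\infty$. The solution of $(-\Delta_{D,0})^{1-s}w_{v}=v$ is then given explicitly by
\[
w_{v}=\sum_{k}\lambda_{k}^{s-1}v_{k}\phi_{k},
\]
and
\[
\norm{w_{v}}_{\mH^{2}(\Omega)}^{2}=\sum_{k}\lambda_{k}^{2}\lambda_{k}^{2s-2}\abs{v_{k}}^{2}=\norm{v}_{\mH^{2s}(\Omega)}^{2}<\infty .
\]
This gives existence and uniqueness of $w_{v}$ in $\mH^{2}(\Omega)$.

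The first key step is to identify $\mH^{2}(\Omega)$ with $H_{0}^{1}(\Omega)\cap H^{2}(\Omega)$, with equivalent norms. This is where the assumption $\partial\Omega\in C^{1,\alpha}$ with $\alpha>\frac{1}{2}$ enters: for such domains the Dirichlet Laplacian has $H^{2}$-regular domain, as in \cite{GM11SelfAdjointLaplacian}. In particular each $\phi_{k}\in H_{0}^{1}(\Omega)\cap H^{2}(\Omega)$, so the pairing $\br{u,\partial_{\nu}\phi_{k}}_{\partial\Omega}$ is a genuine $H^{-\frac{1}{2}}\times H^{\frac{1}{2}}$ pairing. Moreover, the partial sums $\sum_{k\le N}\lambda_{k}^{s-1}v_{k}\phi_{k}$ converge to $w_{v}$ in $H^{2}(\Omega)$. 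By the boundedness of the Neumann trace \eqref{eq:Neumann-trace-operator}, it follows that
\[
\partial_{\nu}w_{v}=\sum_{k}\lambda_{k}^{s-1}v_{k}\,\partial_{\nu}\phi_{k}\quad\text{with convergence in }H^{\frac{1}{2}}(\partial\Omega).
\]
Pairing with $u|_{\partial\Omega}\in H^{-\frac{1}{2}}(\partial\Omega)$ and using continuity of the duality pairing, we obtain
\[
\br{u,\partial_{\nu}w_{v}}_{\partial\Omega}=\sum_{k}\lambda_{k}^{s-1}v_{k}\br{u,\partial_{\nu}\phi_{k}}_{\partial\Omega}.
\]

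Next we expand the right-hand side. By \Cref{def:spectral-fractional-laplacian} and orthonormality,
\[
\left((-\Delta_{D})^{s}u,v\right)_{L^{2}(\Omega)}=\sum_{k}\lambda_{k}^{s}\bigl(u_{k}+\lambda_{k}^{-1}\br{u,\partial_{\nu}\phi_{k}}_{\partial\Omega}\bigr)v_{k}.
\]
This series converges absolutely by Cauchy--Schwarz, because $u\in\mD^{2s}(\Omega)$ makes the bracketed coefficients times $\lambda_{k}^{s}$ square-summable, and $(v_{k})\in\ell^{2}$. Similarly,
\[
\left(u,(-\Delta_{D,0})^{s}v\right)_{L^{2}(\Omega)}=\sum_{k}u_{k}\lambda_{k}^{s}v_{k},
\]
which converges absolutely since $u\in L^{2}(\Omega)$ and $(\lambda_{k}^{s}v_{k})\in\ell^{2}$. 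Because both series converge, we may subtract them termwise. The $u_{k}$ terms cancel, leaving exactly $\sum_{k}\lambda_{k}^{s-1}v_{k}\br{u,\partial_{\nu}\phi_{k}}_{\partial\Omega}$, which equals $\br{u,\partial_{\nu}w_{v}}_{\partial\Omega}$ by the previous paragraph.

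The main obstacle is the justification in the second paragraph. It requires the $H^{2}$-regularity and trace facts for $C^{1,\alpha}$ domains; these are imported from \cite{GM11SelfAdjointLaplacian} and \cite[Lemma~3.1]{APR18SpectralFractionalLaplacian} rather than reproved. It also requires checking that the pairing $\br{u,\partial_{\nu}\phi_{k}}_{\partial\Omega}$ appearing in the definition of $\mD^{2s}(\Omega)$ is the same $H^{-\frac{1}{2}}\times H^{\frac{1}{2}}$ pairing. Once these points are settled, the remainder is the termwise bookkeeping above.
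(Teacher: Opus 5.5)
Your argument is correct. The paper does not prove this lemma itself. It imports the formula as a special case of \cite[Theorem~3.1]{APR18SpectralFractionalLaplacian}, with \cite{GM11SelfAdjointLaplacian} supplying the regularity theory on $C^{1,\alpha}$ domains.

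You instead verify the identity directly from the definitions, outsourcing only two facts:
\begin{itemize}
\item $\mH^{2}(\Omega)=H^{2}(\Omega)\cap H_{0}^{1}(\Omega)$ with equivalent norms;
\item boundedness of the Neumann trace into $H^{\frac{1}{2}}(\partial\Omega)$.
\end{itemize}
The rest of your bookkeeping is sound. Parseval handles the two interior terms. Absolute convergence follows from $u\in\mD^{2s}(\Omega)$ and $v\in\mH^{2s}(\Omega)$. The boundary term is identified through $H^{\frac{1}{2}}$-convergence of $\sum_{k}\lambda_{k}^{s-1}v_{k}\,\partial_{\nu}\phi_{k}$ to $\partial_{\nu}w_{v}$. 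Your identification of this boundary term as a limit of ordered partial sums is consistent with the absolutely convergent difference series. This is essentially the computation behind the cited theorem.

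The gain of your version is transparency. It shows that the hypothesis $\alpha>\frac{1}{2}$ enters only through the $H^{2}$-regularity of the Dirichlet Laplacian and the Neumann trace. It also makes explicit that the boundary pairing in the definition of $\mD^{2s}(\Omega)$ must be read as the $H^{-\frac{1}{2}}\times H^{\frac{1}{2}}$ duality for the statement to make sense. The citation is shorter but leaves both points implicit.
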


\begin{remark}\label{rem:bounded-Neumann-data}
In particular $w_{v}\in H^{2}(\Omega)\cap H_{0}^{1}(\Omega)$ and we have 
\[
\norm{w_{v}}_{H^{2}(\Omega)\cap H_{0}^{1}(\Omega)} \le C \norm{\Delta w_{v}}_{L^{2}(\Omega)} = C \norm{(-\Delta_{D,0})^{s}v}_{L^{2}(\Omega)} = C \norm{v}_{\mH^{2s}(\Omega)},
\]
see \cite[(3.7)]{APR18SpectralFractionalLaplacian}. 
\end{remark}

\addtocontents{toc}{\SkipTocEntry}
\subsection{Definition of DN map and its linearization\label{sec:Born-approximation}}

We first recall some facts related to classical Schr\"{o}dinger equation (i.e. \eqref{eq:sch-main} corresponds to $s=1$), which reads 
\begin{equation}
\left( -\Delta -q \right)u=0 \text{ in $\Omega$} ,\quad u|_{\partial\Omega} = g.  \label{eq:sch-main-classical}
\end{equation}
Suppose that 0 is not an eigenvalue of \eqref{eq:sch-main-classical}. In this case, for each $g\in H^{\frac{1}{2}}(\partial\Omega)$, it is well-known that the normal derivative $\partial_{\nu}$ is well-defined in $H^{-\frac{1}{2}}(\partial\Omega)$ by the formula 
\begin{equation}
\left( \partial_{\nu}u,\phi \right)_{\partial\Omega} := \int_{\Omega} \left( \nabla u \cdot \nabla \tilde{\phi} + qu\tilde{\phi} \right) \, \rmd x \quad \text{for all $g,\phi\in H^{\frac{1}{2}}(\partial\Omega)$,} \label{eq:Definition-DN1}
\end{equation}
where $\tilde{\phi} \in H^{1}(\Omega)$ is any function with trace $\phi$ on $\partial\Omega$. The definition \eqref{eq:Definition-DN1} is independent of choices of $\tilde{\phi}$. It is well-known that the potential $q\in L^{\infty}(\Omega)$ is uniquely determined by the mapping $g \mapsto \partial_{\nu}u$, see \cite{SU87Calderon}. Let $\mP : H^{\frac{1}{2}}(\partial\Omega)\rightarrow H^{1}(\Omega)$ be the classical Poisson operator such that $\mP g$ is the unique solution of 
\begin{equation}
-\Delta \mP g=0  \text{ in $\Omega$} ,\quad \mP g|_{\partial\Omega} = g. \label{eq:linearized-linear1}
\end{equation}
In fact, the well-posedness of \eqref{eq:linearized-linear1} still holds for low regularity boundary data $g$. If $g\in H^{s-\frac{1}{2}}(\partial\Omega)$ with $s\in[0,1]$, using \cite[Lemma~4.1]{APR18SpectralFractionalLaplacian}, there exists a unique very-weak solution $\mP g\in H^{s}(\Omega)$ of \eqref{eq:linearized-linear1} in the sense of 
\[
\int_{\Omega} (\mP g)\left((-\Delta)\varphi\right) \,\rmd x = -\int_{\partial\Omega} g\partial_{\nu}\varphi \, \rmd S \quad\text{for all $\varphi\in H_{0}^{1}(\Omega)\cap H^{2}(\Omega)$.}
\]
Using \cite[Remark~4.1]{APR18SpectralFractionalLaplacian}, the well-posedness result can be extended to general Lipschitz domains when $s\in[\frac{1}{2},1]$. 

We now see that $\mP g$ does not carry any information about the potential $q \in L^{\infty}(\Omega)$ and 
\begin{equation*}
( -\Delta - q ) (u - \mP g) = q \mP g \text{ in $\Omega$} ,\quad (u - \mP g)|_{\partial\Omega}=0.
\end{equation*}
In fact the potential $q\in L^{\infty}(\Omega)$ is uniquely determined by the mapping $g \mapsto \partial_{\nu}(u - \mP g)$. This suggests us to consider an alternative definition of DN map as follows: 
\begin{equation*}
\Lambda_{q} : H^{\frac{1}{2}}(\partial\Omega) \mapsto H^{-\frac{1}{2}}(\partial\Omega) ,\quad \Lambda_{q}g := \partial_{\nu}(u - \mP g). 
\end{equation*}

We now define the corresponding DN map for the fractional Schr\"{o}dinger equation \eqref{eq:sch-main}. The following is a special case of \cite[Theorem~4.2]{APR18SpectralFractionalLaplacian}. 

\begin{lemma}[Existence and uniqueness] \label{lem:well-posedness}
Let $\Omega$ be a bounded $C^{1,\alpha}$ domain for some $\alpha>\frac{1}{2}$, and let $\frac{1}{2}<s<1$. Given any $f\in H^{-s}(\Omega)$ and $g\in H^{s-\frac{1}{2}}(\partial\Omega)$, there exists a unique solution $v\in H^{s}(\Omega)$ of 
\begin{equation}
(-\Delta_{D})^{s}v=f \text{ in $\Omega$} ,\quad v|_{\partial\Omega} = g. 
\end{equation}
In addition, there exists a positive constant $C=C(\Omega,s)$, which is independent of $v,f$ and $g$, such that 
\[
\norm{v}_{H^{s}(\Omega)}\le C \left( \|f\|_{H^{-s}(\Omega)} + \norm{g}_{H^{s-\frac{1}{2}}(\partial\Omega)} \right). 
\]
\end{lemma}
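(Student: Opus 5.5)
The statement is \Cref{lem:well-posedness}, which the paper itself calls a special case of \cite[Theorem~4.2]{APR18SpectralFractionalLaplacian}. My plan is to reduce it to two separate problems: one carrying the boundary data with zero right-hand side, and one carrying the source with zero boundary data. I use the linearity of $(-\Delta_{D})^{s}$ on $\mD^{s}(\Omega)$ and its extension $\mD^{s}(\Omega)\to\mH^{-s}(\Omega)$.

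\emph{Boundary part.} Let $u_{g}:=\mP g\in H^{s}(\Omega)$ be the very-weak harmonic extension given by \cite[Lemma~4.1]{APR18SpectralFractionalLaplacian}, which satisfies $\norm{\mP g}_{H^{s}(\Omega)}\le C\norm{g}_{H^{s-\frac12}(\partial\Omega)}$. I claim that $(-\Delta_{D})^{s}\mP g=0$. Take $\varphi=\phi_{k}$ in the very-weak formulation. This gives
\[
\lambda_{k}\br{\mP g,\phi_{k}}_{\Omega}=-\br{g,\partial_{\nu}\phi_{k}}_{\partial\Omega},
\]
so every spectral coefficient $\br{\mP g,\phi_{k}}_{\Omega}+\lambda_{k}^{-1}\br{g,\partial_{\nu}\phi_{k}}_{\partial\Omega}$ in \Cref{def:spectral-fractional-laplacian} vanishes. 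Hence $\mP g\in\mD^{\gamma}(\Omega)$ for every $\gamma$, and $(-\Delta_{D})^{s}\mP g=0$.

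\emph{Source part.} I identify $H^{-s}(\Omega)$ with $\mH^{-s}(\Omega)$ via \eqref{eq:norm-equivalence1}. Then $(-\Delta_{D,0})^{s}:\mH^{s}(\Omega)\to\mH^{-s}(\Omega)$ is an isometric isomorphism, because it acts diagonally, multiplying the $k$-th coefficient by $\lambda_{k}^{s}$. So $w:=(-\Delta_{D,0})^{-s}f\in\mH^{s}(\Omega)=H^{s}_{0}(\Omega)$ exists, and $\norm{w}_{H^{s}}\le C\norm{f}_{H^{-s}}$. Because $s>\frac12$, $w$ has zero trace. Its boundary pairing terms also vanish, so $(-\Delta_{D})^{s}w=(-\Delta_{D,0})^{s}w=f$. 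This agrees with the coincidence of the two operators on $C_{c}^{\infty}(\Omega)$, extended by density and continuity.

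\emph{Existence and estimate.} Set $v=\mP g+w$. Then $v|_{\partial\Omega}=g$ and $(-\Delta_{D})^{s}v=f$, and adding the two bounds gives the stated estimate.

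\emph{Uniqueness.} Suppose $v\in H^{s}(\Omega)$ solves the problem with $f=0$ and $g=0$. Since $s>\frac12$, zero trace means $v\in H_{0}^{s}(\Omega)=\mH^{s}(\Omega)$, and then $(-\Delta_{D})^{s}v=(-\Delta_{D,0})^{s}v=0$. Its coefficients are $\lambda_{k}^{s}\br{v,\phi_{k}}_{\Omega}=0$ for all $k$, so $v=0$.

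\emph{Main obstacle.} The delicate step is making rigorous the boundary pairings $\br{v,\partial_{\nu}\phi_{k}}_{\partial\Omega}$ for $v\in H^{s}(\Omega)$ with trace in $H^{s-\frac12}(\partial\Omega)$, and justifying the choice $\varphi=\phi_{k}$ as a test function. This needs $\phi_{k}\in H^{2}(\Omega)\cap H_{0}^{1}(\Omega)$, with $\partial_{\nu}\phi_{k}$ controlled through \eqref{eq:Neumann-trace-operator}; that is exactly where the hypothesis $C^{1,\alpha}$ with $\alpha>\frac12$ enters. It also needs the pairing to be consistent with the very-weak trace. For this I would rely on \Cref{lem:integration-by-parts} and \cite[Theorem~4.2]{APR18SpectralFractionalLaplacian}.
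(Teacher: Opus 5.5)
Your proposal is correct and is essentially the argument behind the result the paper invokes. The paper gives no proof of its own beyond citing \cite[Theorem~4.2]{APR18SpectralFractionalLaplacian}, which rests on the same splitting $v=\mP g+(-\Delta_{D,0})^{-s}f$: your coefficient computation is exactly the identity $\mP g=\mP^{s}g$ of \eqref{eq:Poisson-equivalence}, and the $H^{s}$ bound on $\mP g$ is still imported from \cite[Lemma~4.1]{APR18SpectralFractionalLaplacian}.

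The step you flag as the main obstacle is benign for $s>\frac12$. The trace of an $H^{s}(\Omega)$ function lies in $H^{s-\frac12}(\partial\Omega)\subset L^{2}(\partial\Omega)$, so it pairs directly with $\partial_{\nu}\phi_{k}\in H^{\frac12}(\partial\Omega)$. That the $H^{s}$-trace of $\mP g$ equals $g$ follows by density from smooth $g$, using the continuity of $\mP$ and of the trace map.
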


Accordingly, we may define the Poisson operator $\mP^{s}:H^{s-\frac{1}{2}}(\partial\Omega)\rightarrow H^{s}(\Omega)$ by $\mP^{s}g := u_{g}$, where $u_{g}=v$ is the unique function given in \Cref{lem:well-posedness} with $f\equiv 0$. It is worth-noting to mention \cite[Theorem~4.1]{APR18SpectralFractionalLaplacian} that 
\begin{equation}
\mP g = \mP^{s} g \quad\text{for all $g \in H^{\frac{1}{2}}(\partial\Omega)$.} \label{eq:Poisson-equivalence}
\end{equation}

From \eqref{eq:sch-main}, we now see that 
\begin{equation*}
\left( (-\Delta_{D,0})^{s} - q \right) (\tilde{u} - \mP^{s} g) = q \mP^{s} g \text{ in $\Omega$} ,\quad (\tilde{u} - \mP^{s} g)|_{\partial\Omega}=0.
\end{equation*}
Since 0 is not a Dirichlet eigenvalue of $\left( (-\Delta_{D,0})^{s} - q \right)$, for each $g \in H^{s-\frac{1}{2}}(\partial\Omega)$, there exists a unique solution $\tilde{v}[g] \in H_{0}^{s}(\Omega)$ of 
\begin{equation}
\left( (-\Delta_{D,0})^{s} - q \right) \tilde{v}[g] = q \mP^{s} g \text{ in $\Omega$} ,\quad \tilde{v}[g]|_{\partial\Omega}=0. \label{eq:v-tilde-g}
\end{equation}
By using \eqref{eq:norm-equivalence2}, one sees that $\tilde{v}[g] \in\mH^{2s}(\Omega)$. Accordingly, the DN map of \eqref{eq:linearized-linear1} can be defined by 
\begin{equation}
\Lambda_{q}^{s}g := \partial_{\nu} w_{\tilde{v}[g]} \quad \text{on $\partial\Omega$}, \label{eq:s-DN-map-definition}
\end{equation}
where $w_{v}$ is the function given in \Cref{lem:integration-by-parts}. 

If $\norm{q}_{L^{\infty}(\Omega)}$ is smaller than the first Dirichlet eigenvalue of $(-\Delta_{D,0})^{s}$, then 0 is not a Dirichlet eigenvalue of $\left( (-\Delta_{D,0})^{s} - q \right)$. In this case, the formula above suggests us to approximate $\tilde{v}[g]$ by the unique solution $v[g] \in H_{0}^{s}(\Omega) \cap \mH^{2s}(\Omega)$ of 
\begin{equation}
(-\Delta_{D,0})^{s} v[g] = q \mP^{s} g \text{ in $\Omega$} ,\quad v[g]|_{\partial\Omega}=0. \label{eq:Born-approx}
\end{equation}
By using \Cref{lem:well-posedness}, one sees that \eqref{eq:Born-approx} is well-posed \emph{for all} $q\in L^{\infty}(\Omega)$. Accordingly, we can define the mapping 
\begin{equation}
(\rmd \Lambda^{s}[q])g := \partial_{\nu} w_{v[g]} \quad \text{on $\partial\Omega$}, \label{eq:linearized-s-DN-map-definition}
\end{equation}
where $w_{v}$ is the function given in \Cref{lem:integration-by-parts}. 

By using the observation $\Lambda_{0}^{s}g = 0$ for all $g\in H^{s-\frac{1}{2}}(\partial\Omega)$, one can show the $\rmd \Lambda^{s}$ is the Fr\'{e}chet derivative of the non-linear functor $q \mapsto \Lambda_{q}^{s}$ at $q=0$. 

\begin{proposition} \label{prop:Frechet-derivative-s-DN}
Let $\Omega$ be a bounded $C^{1,\alpha}$ domain for some $\alpha>\frac{1}{2}$, let $\frac{1}{2}<s<1$, and let $\lambda(\Omega,s) > 0$ be the first Dirichlet eigenvalue of $(-\Delta_{D,0})^{s}$. Then there exists a constant $C = C(\Omega,s)$ such that 
\begin{equation}
\norm{\Lambda_{q}^{s} - \Lambda_{0}^{s} - \rmd \Lambda^{s}[q]}_{H^{s-\frac{1}{2}}(\partial\Omega) \rightarrow H^{\frac{1}{2}}(\partial\Omega)} \le C(\Omega,s) \norm{q}_{L^{\infty}(\Omega)}^{2}, \label{eq:Frechet}
\end{equation}
for all $q \in L^{\infty}(\Omega)$ with $\norm{q}_{L^{\infty}(\Omega)} \le \frac{1}{2} \lambda(\Omega,s)$. 
\end{proposition}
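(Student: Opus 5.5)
The argument rests on the difference $\tilde v[g]-v[g]$, together with the fact that the map $v\mapsto \partial_\nu w_v$ is linear and bounded from $\mH^{2s}(\Omega)$ into $H^{\frac12}(\partial\Omega)$. Since $\Lambda_0^s=0$ (for $q=0$ we have $\tilde v[g]=0$), \eqref{eq:s-DN-map-definition} and \eqref{eq:linearized-s-DN-map-definition} give
\[
(\Lambda_q^s-\Lambda_0^s-\rmd\Lambda^s[q])g=\partial_\nu w_{\tilde v[g]-v[g]} ,
\]
because $v\mapsto w_v$ is linear. By \Cref{rem:bounded-Neumann-data} and the boundedness of the Neumann trace \eqref{eq:Neumann-trace-operator} on $H^2(\Omega)\cap H^1_0(\Omega)$,
\[
\norm{\partial_\nu w_{z}}_{H^{\frac12}(\partial\Omega)}\le C\norm{w_z}_{H^2(\Omega)}\le C\norm{z}_{\mH^{2s}(\Omega)}=C\norm{(-\Delta_{D,0})^s z}_{L^2(\Omega)} ,
\]
using \eqref{eq:norm-equivalence2}. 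It therefore suffices to bound $z:=\tilde v[g]-v[g]$ in $\mH^{2s}(\Omega)$ by $C\norm{q}_{L^\infty}^2\norm{g}_{H^{s-\frac12}(\partial\Omega)}$.

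Subtracting \eqref{eq:Born-approx} from \eqref{eq:v-tilde-g} gives
\[
(-\Delta_{D,0})^s z = q\tilde v[g] \quad\text{in $\Omega$}, \qquad z|_{\partial\Omega}=0,
\]
and hence $\norm{z}_{\mH^{2s}}=\norm{q\tilde v[g]}_{L^2}\le \norm{q}_{L^\infty}\norm{\tilde v[g]}_{L^2}$. The remaining task is an $L^2$ bound of the form $\norm{\tilde v[g]}_{L^2}\le C\norm{q}_{L^\infty}\norm{g}_{H^{s-\frac12}}$. Here I would use the spectral representation. On $\mH^{2s}(\Omega)$ the operator $(-\Delta_{D,0})^s$ is self-adjoint with spectrum $\{\lambda_k^s\}$, so $\norm{(-\Delta_{D,0})^{-s}}_{L^2\to L^2}=\lambda_1^{-s}=\lambda(\Omega,s)^{-1}$. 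Writing \eqref{eq:v-tilde-g} as the fixed-point equation
\[
\tilde v=(-\Delta_{D,0})^{-s}\bigl(q\tilde v+q\mP^s g\bigr),
\]
and using $\norm{q}_{L^\infty}\le\frac12\lambda(\Omega,s)$, the map $\tilde v\mapsto(-\Delta_{D,0})^{-s}(q\tilde v)$ has $L^2$ operator norm at most $\frac12$. A Neumann series argument then shows
\[
\norm{\tilde v[g]}_{L^2}\le 2\lambda^{-1}\norm{q}_{L^\infty}\norm{\mP^s g}_{L^2}.
\]
By \Cref{lem:well-posedness} with $f=0$, $\norm{\mP^s g}_{L^2}\le\norm{\mP^s g}_{H^s}\le C\norm{g}_{H^{s-\frac12}}$. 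Combining the three estimates gives \eqref{eq:Frechet}.

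The only point requiring care is that the solution $\tilde v[g]$ in \eqref{eq:v-tilde-g}, which lies in $H^s_0(\Omega)$, coincides with the $L^2$ fixed point and belongs to $\mH^{2s}(\Omega)$. This holds because the right-hand side $q(\tilde v+\mP^s g)$ is in $L^2$, and \eqref{eq:norm-equivalence2} then puts $\tilde v$ in $\mH^{2s}(\Omega)$. The same fact makes the smallness condition on $q$ sufficient for the unique solvability assumed in the definition of $\Lambda_q^s$. I expect this identification to be routine rather than a real obstacle.
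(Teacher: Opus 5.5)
Your proposal is correct and follows essentially the same route as the paper. Both arguments use that $\tilde v[g]-v[g]$ solves $(-\Delta_{D,0})^s(\tilde v[g]-v[g])=q\tilde v[g]$, control the Neumann data $\partial_\nu w_{\tilde v[g]-v[g]}$ by the $\mH^{2s}$ norm, bound $\norm{\tilde v[g]}_{L^2}$ by absorbing the $q\tilde v[g]$ term using the spectral gap and the smallness of $q$, and finish with \Cref{lem:well-posedness} for $\mP^s g$. The differences are cosmetic: you absorb via a contraction (Neumann series) on $L^2$, whereas the paper squares the spectral identity and uses $(a+b)^2\le 2a^2+2b^2$. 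Your normalization $\norm{(-\Delta_{D,0})^{-s}}_{L^2\to L^2}=\lambda(\Omega,s)^{-1}$ is also cleaner than the paper's, which writes $\lambda(\Omega,s)^{2s}$ where it means $\lambda_1^{2s}$.
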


\begin{proof}
From \eqref{eq:v-tilde-g} and \eqref{eq:Born-approx}, we see that 
\begin{equation*}
(-\Delta_{D,0})^{s} \left( \tilde{v}[g] - v[g] \right) = q \tilde{v}[g], 
\end{equation*}
therefore from \eqref{eq:Neumann-trace-operator}, \Cref{rem:bounded-Neumann-data} and \eqref{eq:norm-equivalence2} we have 
\begin{equation}
\begin{aligned}
& \norm{\Lambda_{q}^{s}g - \Lambda_{0}^{s}g - (\rmd \Lambda^{s}[q])g}_{H^{\frac{1}{2}}(\partial\Omega)} \\
& \quad \le C(\Omega) \norm{\tilde{v}[g] - v[g]}_{\mH^{2s}(\Omega)} \le C(\Omega) \norm{q}_{L^{\infty}(\Omega)} \norm{\tilde{v}[g]}_{L^{2}(\Omega)}. 
\end{aligned} \label{eq:Frechet1}
\end{equation}
\textcolor{black}{From \eqref{eq:v-tilde-g}, we compute
\begin{align}
    \lambda(\Omega, s)^{2s} \norm{\tilde{v}[g]}_{L^2(\Omega)}^2 &\leq \sum_{k = 1}^{\infty} \lambda_k^{2s}|\langle\tilde{v}[g], \phi_k \rangle_{\Omega}|^2\\
    &= \norm{(-\Delta_{D, 0})^s \tilde{v}[g]}^2_{L^2(\Omega)}\\
    &\leq 2 \norm{q \tilde{v}[g]}^2_{L^2(\Omega)} + 2 \norm{q \mP^s g}^2_{L^2(\Omega)}\\
    &\leq 2 \norm{q}^2_{L^{\infty}(\Omega)} \norm{\tilde{v}[g]}^2_{L^2(\Omega)} + 2 \norm{q}_{L^{\infty}(\Omega)}^2\norm{\mP^{s}g}^2_{L^2(\Omega)}.
\end{align}
Therefore
\begin{align}
    \norm{\tilde{v}[g]}_{L^2(\Omega)} \leq \frac{4}{\lambda(\Omega, s)^{s}} \norm{q}_{L^{\infty}(\Omega)}\norm{\mP^{s}g}_{L^2(\Omega)},
\end{align}
}for all $q$ with $\norm{q}_{L^{\infty}(\Omega)} \le \frac{1}{2} \lambda(\Omega,s)^s$. Combining the above inequality with \Cref{lem:well-posedness}, one reaches 
\begin{equation}
\norm{\tilde{v}[g]}_{L^{2}(\Omega)} \le C(\Omega,s) \norm{q}_{L^{\infty}(\Omega)} \norm{g}_{H^{s-\frac{1}{2}}(\partial\Omega)}. \label{eq:Frechet2}
\end{equation}
Combining \eqref{eq:Frechet1} and \eqref{eq:Frechet2}, we conclude \eqref{eq:Frechet}. 
\end{proof}

\begin{remark}[Born approximation] \label{rem:Born-approximation}
In view of \Cref{lem:well-posedness}, we define the Green's operator $\mG^{s}:H^{-s}(\Omega)\rightarrow H^{s}(\Omega)$ by $\mG^{s} F:=u_{F}$, where $u_{F}$ is the unique solution of 
\begin{equation}
(-\Delta_{D,0})^{s} u_{F} = F \text{ in $\Omega$} ,\quad u_{F}|_{\partial\Omega}=0. \label{eq:sch-linearized2}
\end{equation}
Since $\mG^{s}:L^{2}(\Omega)\rightarrow L^{2}(\Omega)$ is a bounded linear operator, then if $\norm{q}_{L^{\infty}(\Omega)}$ is sufficiently small, then we have $\norm{\mG^{s}\circ\mM_{q}}_{L^{2}(\Omega)\rightarrow L^{2}(\Omega)}<1$, where $\mM_{q}$ is the multiplication operator by $q\in L^{\infty}(\Omega)$. In this case, one can verify that 
\begin{equation*}
\begin{aligned}
\tilde{v}[g] &= \sum_{j=1}^{\infty} \left( (\mG^{s}\circ\mM_{q})^{j}\circ\mP^{s} \right) g \quad \text{(converge in $L^{2}(\Omega)$),} \\
v[g] &= (\mG^{s}\circ\mM_{q}\circ\mP^{s})g.
\end{aligned}
\end{equation*}
One sees that $v[g]$ is exactly the principal term of $\tilde{v}[g]$, in other words, $v[g]$ is the Born approximation of $\tilde{v}[q]$. 
\end{remark}

\section{Proof of \texorpdfstring{\Cref{thm:main}}{Theorem~\ref{thm:main}}}

\begin{lemma}[Alessandrini identity] \label{lem:Alessandrini}
Let $\frac{1}{2}<s<1$ and given any $g,h\in C^{\infty}(\partial\Omega)$. We define $u_{g}=\mP^{s} g \equiv \mP g$ and $u_{h} = \mP^{s} h \equiv \mP h$ {\rm (}see \eqref{eq:Poisson-equivalence}{\rm )}. Then we have the identity 
\begin{equation}
\br{h,(\rmd\Lambda^{s}[q])g}_{\partial\Omega} = -\int_{\Omega} qu_{h}u_{g}\,\rmd x\quad\text{for all $g,h\in C^{\infty}(\partial\Omega)$.}\label{eq:alessandrini-identity}
\end{equation}
\end{lemma}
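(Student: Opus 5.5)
The plan is to apply the integration by parts formula of \Cref{lem:integration-by-parts} with $u = u_{h} = \mP^{s}h$ and $v = v[g]$, the Born approximation solution of \eqref{eq:Born-approx}. By \eqref{eq:linearized-s-DN-map-definition}, the left-hand side of \eqref{eq:alessandrini-identity} is exactly the boundary term $\br{u_{h},\partial_{\nu}w_{v[g]}}_{\partial\Omega}$, since $u_{h}|_{\partial\Omega}=h$.

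First I check the hypotheses of \Cref{lem:integration-by-parts}.
\begin{itemize}
\item Since $h\in C^{\infty}(\partial\Omega)$, we have $u_{h}=\mP h$ by \eqref{eq:Poisson-equivalence}. This function is harmonic and smooth up to the boundary (for $\partial\Omega$ regular enough; in general $u_h \in H^{1}(\Omega)$), and its trace $h$ lies in $H^{-\frac12}(\partial\Omega)$.
\item We need $u_{h}\in\mD^{2s}(\Omega)$ with $(-\Delta_{D})^{s}u_{h}=0$. Using the definition of $(-\Delta_{D})^{s}$, it suffices to see that each coefficient $\br{u_h,\phi_k}_{\Omega}+\lambda_k^{-1}\br{u_h,\partial_\nu\phi_k}_{\partial\Omega}$ vanishes. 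Green's second identity gives
\[
\lambda_k\int_\Omega u_h\phi_k \,\rmd x = -\int_\Omega u_h\Delta\phi_k \,\rmd x = -\int_{\partial\Omega} h\,\partial_\nu\phi_k \,\rmd S,
\]
using $\Delta u_h=0$ and $\phi_k|_{\partial\Omega}=0$. Hence every coefficient is zero, so $u_h\in\mD^{2s}(\Omega)$ trivially and $(-\Delta_D)^s u_h=0$. This is consistent with $u_h$ being the solution of \Cref{lem:well-posedness} with $f=0$. For lower regularity the very weak formulation gives the same identity.
\item Finally, $v[g]\in\mH^{2s}(\Omega)$, because $(-\Delta_{D,0})^s v[g]=q\mP^s g\in L^2(\Omega)$ (as $q\in L^\infty$ and $\mP^s g\in L^2$) together with \eqref{eq:norm-equivalence2}.
\end{itemize}

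Plugging in, the lemma gives
\[
\br{h,(\rmd\Lambda^{s}[q])g}_{\partial\Omega} = \bigl((-\Delta_D)^s u_h, v[g]\bigr)_{L^2} - \bigl(u_h,(-\Delta_{D,0})^s v[g]\bigr)_{L^2} = 0 - \int_\Omega u_h\, q\, u_g\,\rmd x,
\]
which is \eqref{eq:alessandrini-identity}.

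The main obstacle is the pairing convention and sign. The boundary pairing $\br{\cdot,\cdot}_{\partial\Omega}$ must be read as the $H^{-\frac12}\times H^{\frac12}$ duality, which is real and symmetric. I also need to confirm that the sign convention for $\partial_\nu$ in \Cref{lem:integration-by-parts} matches the one in \eqref{eq:linearized-s-DN-map-definition}. Both use the same $w_v$ and $\partial_\nu$, so the argument goes through.
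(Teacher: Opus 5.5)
Your proposal is correct and follows the paper's proof: apply \Cref{lem:integration-by-parts} with $u=u_{h}$ and $v=v[g]=\mG^{s}(qu_{g})$, then use $(-\Delta_{D})^{s}u_{h}=0$ and $(-\Delta_{D,0})^{s}v[g]=qu_{g}$. Your explicit checks (the spectral coefficients of $u_h$ vanish via Green's identity or the very weak formulation, and $v[g]\in\mH^{2s}(\Omega)$) fill in hypotheses that the paper leaves implicit.
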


\begin{proof}
Choosing $u=u_{h}$ and $v=\tilde{u}_{g}=\mG^{s}(qu_{g})$ in the integration by parts formula in \Cref{lem:integration-by-parts}, we see that 
\[
\br{h,\partial_{\nu}w_{\tilde{u}_{g}}}_{\partial\Omega} = \left((-\Delta_{D})^{s}u_{h},\tilde{u}_{g}\right)_{L^{2}(\Omega)} - \left( u_{h},(-\Delta_{D,0})^{s}\tilde{u}_{g} \right)_{L^{2}(\Omega)} = -\int_{\Omega}qu_{h}u_{g}\,\rmd x,
\]
where $w_{v}$ is the function given in \Cref{lem:integration-by-parts}, which concludes \eqref{eq:alessandrini-identity}. 
\end{proof}

We are now ready to prove our main result modifying the ideas in \cite{SU87Calderon}. 

\begin{proof}[Proof of \Cref{thm:main}] 
Using \Cref{lem:Alessandrini}, we have 
\[
\br{h,(\rmd \Lambda^{s}[q^{(j)}])g}_{\partial\Omega} = -\int_{\Omega} q^{(j)}u_{h}u_{g} \, \rmd x \quad \text{for all $j=1,2$.}
\]
Therefore from \eqref{eq:DN-map-data} we know that 
\begin{equation}
\int_{\Omega} \left(q^{(1)}-q^{(2)}\right)u_{h}u_{g}\,\rmd x=0.\label{eq:Alessandrini-result}
\end{equation}
For each $\xi\in\mR^{n}\setminus\{0\}$, choose  $\eta \in \mR^n\setminus\{0\} $ such that $\eta \cdot \xi=0$, and $ |\eta|=|\xi|$. Then the functions of the form $e^{\alpha \cdot x}$ are solution of $-\Delta (\cdot)=0$  if $\alpha \cdot \alpha=0$ for any $\alpha\in \mathbb{C}^n$. We next choose $ u_h= e^{\alpha \cdot x}$ and $u_g= e^{\beta \cdot x}$, where $ \alpha = \frac{1}{2}(\eta-\bfi \xi)$ and $\beta= \frac{1}{2}(- \eta-\bfi\xi)$. 
Since $u_{h}u_{g} = e^{-\bfi\xi\cdot x}$, then we have\footnote{We also can obtain \eqref{eq:Fourier-difference} from \eqref{eq:Alessandrini-result} using the complex geometrical optics solutions as in \cite{Gun19CalderonNote,SU87Calderon}.}
\begin{equation}
\left(\chi_{\Omega}(q^{(1)}-q^{(2)})\right)\ehat(\xi) = \int_{\Omega} e^{-\bfi\xi\cdot x}\left(q^{(1)}-q^{(2)}\right)\,\rmd x = 0 \quad \text{for all $\xi\in\mR^{n}\setminus \{0\}$,}\label{eq:Fourier-difference}
\end{equation}
By Paley-Wiener theorem, we conclude that $q_1=q_2$ in $\Omega$.  
	
We now prove the stability result by modifying the ideas in \cite{Ale88stable}. Without loss of generality, we may assume
that 
\begin{equation}
\norm{\rmd\Lambda^{s}[q^{(1)}]-\rmd\Lambda^{s}[q^{(2)}]}_{*} \neq 0. \label{eq:WLOG}
\end{equation}
Suppose $\Omega \subset B(0, R)$. Plugging $u_g$ and $u_h$ into \eqref{eq:alessandrini-identity}, we obtain\begin{equation*}
\begin{aligned}
& \abs{\left(\chi_{\Omega}(q^{(1)}-q^{(2)})\right)\ehat(\xi)} \\
& \quad \le  \norm{\rmd\Lambda^{s}[q^{(1)}] - \rmd\Lambda^{s}[q^{(2)}]}_{*} \norm{u_{h}}_{H^{\frac{1}{2}}(\partial\Omega)} \norm{u_{g}}_{H^{\frac{1}{2}}(\partial\Omega)}\\
& \quad \le C \abs{\xi}^2 e^{{\abs{\xi}R}}\norm{\rmd \Lambda^{s}[q^{(1)}] - \rmd\Lambda^{s}[q^{(2)}]}_{*}
\end{aligned}
\end{equation*}
where the second inequality follows from $\norm{e^{\alpha\cdot x}}_{H^{1}(\Omega)}\le C(\Omega)\abs{\xi} e^{\frac{\abs{\xi}R}{2}},\, \norm{e^{\beta\cdot x}}_{H^{1}(\Omega)}\le C(\Omega)\abs{\xi} e^{\frac{\abs{\xi}R}{2}} $ and the boundedness of the Dirichlet trace operator ${\rm Tr}:H^{1}(\Omega)\rightarrow H^{\frac{1}{2}}(\partial\Omega)$. Let $\rho>0$ be a constant to be determine later, we see that 
\begin{equation}
\begin{aligned}
& \norm{\chi_{\Omega}(q^{(1)}-q^{(2)})}_{H^{-1}(\mathbb{R}^{n})}^2 \\
& \quad = \left(\int_{\abs{\xi}\le\rho} + \int_{\abs{\xi}>\rho}\right)\frac{\abs{\left(\chi_{\Omega}(q^{(1)}-q^{(2)})\right)\ehat(\xi)}^{2}}{1+\abs{\xi}^{2}} \, \rmd \xi \\
& \quad \le C\rho^{4} e^{2\rho R}\norm{\rmd\Lambda^{s}[q^{(1)}] - \rmd\Lambda^{s}[q^{(2)}]}_{*}^2 + \frac{1}{1+\rho^{2}}\norm{q^{(1)}-q^{(2)}}_{L^{2}(\Omega)}^{2} \\
& \quad \le C\left( e^{6\rho R}\norm{\rmd\Lambda^{s}[q^{(1)}]-\rmd\Lambda^{s}[q^{(2)}]}_{*}^2 + \frac{1}{\rho^{2}}\right) \quad \text{(using \eqref{eq:apriori})}
\end{aligned}\label{eq:Fourier-difference2}
\end{equation}
We now choose
\[
\rho = \frac{\abs{\operatorname{log}(\norm{\rmd\Lambda^{s}[q^{(1)}] - \rmd\Lambda^{s}[q^{(2)}]}_{*})}}{6R}\quad\text{(which is valid by \eqref{eq:WLOG})},
\]
Define, a modulus of continuity
\[w(t) := \sqrt{C}\left( e^{\abs{\operatorname{log}t}}t^2 + \frac{36 R^2}{\abs{\operatorname{log}t}^2}\right)^{\frac{1}{2}} \quad \text{for } t > 0.  \]
There exists a constant $\tilde{C} > 0$ such that
\[w(t) \leq \frac{\tilde{C}}{\abs{\operatorname{log}t}}, \quad 0 < t < 1/e.\]

Hence \eqref{eq:Fourier-difference2} implies 
\[
\begin{aligned}
& \norm{\chi_{\Omega}(q^{(1)}-q^{(2)})}_{H^{-1}(\mR^{n})} \le w(\norm{\rmd\Lambda^{s}[q^{(1)}] - \rmd\Lambda^{s}[q^{(2)}]}_{*}).
\end{aligned}
\]
which is our desired result. 
\end{proof}

\section{Proof of \texorpdfstring{\Cref{th_desnity}}{Theorem~\ref{th_density}}} 

Before proving \Cref{th_desnity}, we first present several auxiliary lemmas. We begin with the following lemma, which can be proved by adapting the arguments from \cite[Step~1 in the proof of Theorem~2.3]{SS_linearized} and \cite[Lemma 5.1]{FIKO_jst}.

\begin{lemma}\label{seq-of-fun}
Let $D\subset\mR^{n-1}$ be an open set, fix $M>0$ and set $\tilde{D}:=(-M, M)\times D$. 
Consider the symmetric matrix 
\begin{equation*}
\theta^{2}(y_1,y') = \left(\begin{array}{cccc} 
\theta_{11}^{2}(y_{1},y') & \theta_{12}^{2}(y_{1},y') & \cdots & \theta_{1n}^{2}(y_{1},y') \\ 
\theta_{12}^{2}(y_{1},y') & \theta_{22}^{2}(y_{1},y') & \cdots & \theta_{2n}^{2}(y_{1},y') \\ 
\vdots & \vdots & \ddots & \vdots \\ 
\theta_{1n}^{2}(y_{1},y') & \theta_{2n}^{2}(y_{1},y') & \cdots & \theta_{nn}^{2}(y_{1},y')
\end{array}\right) \quad \text{for all $(y_1,y')\in\tilde{D}$}
\end{equation*}
whose entries are smooth, bounded, and compactly supported in $\tilde{D}$. 
Define  
\begin{equation*}
\phi^{2} = \left(\begin{array}{cccc} 
\phi_{11}^{2} & \phi_{12}^{2} & \cdots & \phi_{1n}^{2} \\ 
\phi_{12}^{2} & \phi_{22}^{2} & \cdots & \phi_{2n}^{2} \\ 
\vdots & \vdots & \ddots & \vdots \\ 
\phi_{1n}^{2} & \phi_{2n}^{2} & \cdots & \phi_{nn}^{2}
\end{array}\right) := \theta_{11}\id_{n} - \theta^{2}. 
\end{equation*} 
Fix a unit vector $\eta\in\mR^{n}$ orthogonal to the first coordinate vector $e_{1}\in\mR^{n}$. 
For each $x'\in D$, write $x'=(x_2,x'')$, where $x_2$ is parallel to $\eta$ and each component of $x''$ is orthogonal to it. Let $\widehat{(\cdot)}$ denote the partial Fourier transform with respect to the $x_1$ variable. Suppose that 
\begin{align}\label{0.13}
\int_{\mathbb{R}}\left(\hat{\phi}^2(\lambda, y'):(\eta\otimes \eta) + 2 \bfi \sum_{j=1}^{n}\hat{\theta}_{1j}^2(\lambda, y') \eta_j\right)y_2e^{\lambda y_2} \,\rmd y_2 = 0 
\end{align}
for all $\lambda \in \mathbb{R}$. 
Then there exists a sequence $\{\varphi_k\}_{k = 0}^{\infty} \subset C^{\infty}(\overline{D})$ with $\varphi_k = \partial_{\nu}\varphi_k = 0$ on $\p D$ such that for every $k \geq 0$,
\begin{subequations}\label{eq:0.15-0.16}
\begin{align}\label{0.15}
\partial_{\lambda}^k\hat{\phi}_{ij}^{2}(0, y') &= \partial_{i}\partial_{j}\varphi_k(y') + k(k - 1)\delta_{ij}\varphi_{k - 2}(y') \quad \text{and}\\
\partial_{\lambda}^k \hat{\theta}_{1j}^2(0, y') &= - \bfi k \partial_j \varphi_{k - 1}(y')  \label{0.16}
\end{align}
\end{subequations}
for all $i,j\in\{2,\cdots,n\}$. Here we adopt the convention $\varphi_{-2} = \varphi_{-1} = 0$. 
\end{lemma}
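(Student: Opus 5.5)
We argue by induction on $k$, extracting each $\varphi_k$ from the Taylor coefficients of \eqref{0.13} at $\lambda=0$. For each $k\ge0$ we differentiate \eqref{0.13} $k$ times in $\lambda$ and set $\lambda=0$. Since all entries are compactly supported in $\tilde D$, the partial Fourier transforms in $x_1$ are entire in $\lambda$. The factor $y_2e^{\lambda y_2}$ produces terms $y_2^{m+1}$. Hence the $k$-th derivative at $0$ is a linear combination
\[
\sum_{m=0}^{k}\binom{k}{m}\int_{\mR}\Big(\partial_\lambda^{k-m}\hat\phi^2(0,y'):(\eta\otimes\eta)+2\bfi\sum_j\partial_\lambda^{k-m}\hat\theta^2_{1j}(0,y')\eta_j\Big)y_2^{m+1}\,\rmd y_2=0 .
\]
These relations hold for every unit $\eta\perp e_1$, with $y_2$ the coordinate along $\eta$. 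The $y_2$-integral is therefore a Radon-type (X-ray) moment of the functions $y'\mapsto \partial_\lambda^{l}\hat\phi^2(0,y')$ and $\partial_\lambda^l\hat\theta^2_{1j}(0,y')$ along lines in direction $\eta$, weighted by powers of $y_2$.

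The induction runs as follows. For $k=0$ only $m=0$ contributes. The relation becomes $\int (\hat\phi^2(0,y'):\eta\otimes\eta)\,y_2\,\rmd y_2 + 2\bfi\int \sum_j\hat\theta^2_{1j}(0,y')\eta_j\,y_2\,\rmd y_2=0$ for all directions $\eta$ and all transversal positions $y''$. We first separate real and imaginary parts, or equivalently use the parity of $\hat f(\lambda)$ versus $\overline{\hat f(-\lambda)}$ for real $\theta$. Then we use the standard fact from \cite{SS_linearized,FIKO_jst}. A compactly supported symmetric 2-tensor $f$ whose first-moment ray transform $\int f(y)\eta\cdot\eta\, y_2\,\rmd y_2$ vanishes in all directions, combined with the vanishing zeroth moment obtained at the previous level, is potential: $f=\nabla^{2}\varphi$ with $\varphi$ compactly supported (tensor tomography / Saint-Venant). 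This gives $\hat\phi^2_{ij}(0,\cdot)=\partial_i\partial_j\varphi_0$ and $\hat\theta^2_{1j}(0,\cdot)=0$, which is \eqref{eq:0.15-0.16} for $k=0$.

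For the inductive step we assume \eqref{eq:0.15-0.16} up to order $k-1$. We substitute these expressions into the lower-order terms ($m\ge1$) of the $k$-th differentiated identity. The terms $\partial_i\partial_j\varphi_l:\eta\otimes\eta=\partial_{y_2}^2\varphi_l$ and $\partial_j\varphi_{l}\eta_j=\partial_{y_2}\varphi_l$ are integrated by parts in $y_2$ against $y_2^{m+1}$. This uses $\varphi_l=\partial_\nu\varphi_l=0$, so no boundary terms appear. The $\delta_{ij}$ terms contribute $\varphi_{l-2}$ with explicit combinatorial coefficients. The target is to show that, after these substitutions, all lower-order contributions collapse to exactly $-\int\big(k(k-1)\varphi_{k-2}+2\bfi(-\bfi k)\partial_{y_2}\varphi_{k-1}\big)y_2\,\rmd y_2$-type terms. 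The leftover identity for the top-order unknowns $F=\partial_\lambda^k\hat\phi^2(0,\cdot)-k(k-1)\varphi_{k-2}\id$ and $G_j=\partial_\lambda^k\hat\theta^2_{1j}(0,\cdot)+\bfi k\partial_j\varphi_{k-1}$ then has the same form as the $k=0$ identity. Applying the same tomography result gives $F=\nabla^2\varphi_k$ and $G=0$, i.e.\ \eqref{0.15}–\eqref{0.16}. Smoothness of $\varphi_k$ up to $\p D$ and the conditions $\varphi_k=\partial_\nu\varphi_k=0$ follow from the compact support of $F$ in $D$, after integrating $\nabla^2\varphi_k=F$ along rays from outside the support.

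The main obstacle is the combinatorial bookkeeping in the inductive step. We must verify that the binomial coefficients from differentiating $y_2e^{\lambda y_2}$, combined with the integrations by parts $\int\partial_{y_2}^2\varphi\, y_2^{m+1}=m(m+1)\int\varphi\,y_2^{m-1}$ and $\int\partial_{y_2}\varphi\,y_2^{m+1}=-(m+1)\int\varphi\,y_2^m$, cancel exactly. This cancellation is what produces the specific coefficients $k(k-1)$ and $-\bfi k$. A second delicate point is separating the $\phi^2$ and $\theta^2_{1\cdot}$ unknowns at a single order, since they enter one scalar identity. We would handle this either by exploiting the different parity of the two terms in $\eta\mapsto-\eta$ (the $\eta\otimes\eta$ term is even and the $\eta_j$ term is odd, while $y_2\mapsto-y_2$ also flips), or by following \cite[Lemma 5.1]{FIKO_jst}. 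We would first try the parity argument.
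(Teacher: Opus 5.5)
Your skeleton is the paper's: differentiate \eqref{0.13} $k$ times at $\lambda=0$, separate the $\eta\otimes\eta$ part from the $\eta_j$ part via $\eta\mapsto-\eta$ (which also flips $y_2$), apply a ray-transform kernel theorem, and induct. The bookkeeping you flag as the main obstacle is harmless. Under the induction hypothesis, the $l$-th coefficient for $l<k$ is $\partial_\lambda^l|_{\lambda=0}$ of $(\partial_{y_2}+\lambda)^2\Psi$, where $\Psi=\sum_{l<k}\varphi_l\lambda^l/l!$. Since $(\partial_{y_2}+\lambda)^2=e^{-\lambda y_2}\partial_{y_2}^2e^{\lambda y_2}$, one has $\int(\partial_{y_2}+\lambda)^2\Psi\,y_2e^{\lambda y_2}\,\rmd y_2=0$ for all $\lambda$. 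This gives exactly your reduced identity $\int(F:\eta\otimes\eta+2\bfi G\cdot\eta)\,y_2\,\rmd y_2=0$.

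\textbf{The gap is in the tomography step.} Write $J^m f(y'',\eta)=\int_{\mR}f(y''+t\eta)\,t^m\,\rmd t$ for scalar $f$. The fact you quote is true: $J^0(F:\eta\otimes\eta)=0$ makes $F$ a symmetrized gradient of some $v$, and then $J^1(F:\eta\otimes\eta)=-J^0(v\cdot\eta)=0$ gives $v=\nabla\varphi$. However, no zeroth moment is ever available. Every term of the differentiated identity carries a factor $y_2^{m+1}$ with $m\ge0$, and at $k=0$ there is no previous level.

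First moments alone do not force the conclusion. For radial $\psi\in C_c^\infty$ and antisymmetric $A$, the fields $F=\psi(|y'|)\id$ and $G=\psi(|y'|)Ay'$ satisfy $J^1(F:\eta\otimes\eta)=J^1(G\cdot\eta)=0$ for all $\eta$ and $y''\perp\eta$, because the integrands are odd in $t$. Yet $F$ is not the Hessian of a compactly supported function, and $G\neq0$.

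The missing input is essential, not a proof artifact. For $n=2$, take $D=(0,L)$ and $\theta^2=\mathrm{diag}(\Delta u/y_2,0)$ with $u\in C_c^\infty(\tilde D)$. Then the left side of \eqref{0.13} equals $\pm\int\Delta u\,e^{-\bfi\lambda(y_1\pm\bfi y_2)}\,\rmd y=0$, whereas the conclusion forces $\theta^2=0$. The paper's proof makes the same step, applying Sharafutdinov's theorem to the first-moment equations only, so it does not supply this input either.

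\textbf{How to repair it.} Also assume the zeroth-moment identity $\int(\cdots)\,e^{\lambda y_2}\,\rmd y_2=0$; equivalently, allow the origin of $y_2$ to shift. This is available where the lemma is used, since $a_0=g(y'')e^{-\bfi\lambda(y_1+\bfi y_2)}$ satisfies $T_\eta a_0=0$. The same conjugation collapses the lower-order terms of this identity, and parity separates $F$ and $G$ in both identities. Then $J^0=J^1=0$ gives $G=0$, because $G=\nabla h$ and $J^1(G\cdot\eta)=-J^0h$. It also gives $F=\nabla^{\otimes2}\varphi_k$.
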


\begin{remark}\label{rem:Paley-Wiener}
Note that since both $\theta^2$ and $\phi$ are compactly supported in the first variable, their Fourier transforms $\hat{\theta}(\lambda, y')$ and $\hat{\phi}(\lambda, y')$ are analytic in $\lambda$ by the Paley-Wiener theorem, see, e.g., \cite[Theorem~10.2.1(i)]{FJ98Distribution}. 
\end{remark}

\begin{proof}[Proof of \Cref{seq-of-fun}]
First, set $\lambda = 0$ in \eqref{0.13}, the replace $\eta$ with $-\eta$ and set $\lambda = 0$ again. This yields the two equations 
\begin{equation*} 
\int_{\mR}\hat{\phi}^2(0, y'): (\eta\otimes\eta) y_2 \, \rmd y_2 = 0 
\end{equation*} 
and
\begin{equation*} 
\int_{\mR} \sum_{j=1}^{n}\hat{\theta}_{1j}^2(0, y') \eta_j y_2 \mathrm{d}y_2 = 0.
\end{equation*} 
By \cite[Theorem 2.17.2]{Sharafutdinov_book}, there exists $\varphi_0 \in C^{\infty}(\overline{D})$, with $\varphi_0 = \partial_{\nu} \varphi_0 = 0$ on $\partial D$, such that
\begin{equation*}
\begin{aligned}
\hat{\theta}_{1j}^{2}(0, y') = 0 \quad \text{and}\\
\hat{\phi}_{ij}^2(0, y') = \partial^2_{ij} \varphi_0(y'), 
\end{aligned}
\end{equation*}
for all $i, j \in \{2, \dots, n\}$. 

We proceed by induction on $k$. Assume \eqref{eq:0.15-0.16} holds for all $k < k_0$. Differentiating \eqref{0.13} $k_0$ times with respect to $\lambda$ gives 
\begin{equation*}
\begin{aligned}
0=&   \int_{\mR} \binom{k_0}{0}\left(\partial_{\lambda}^{k_0}\hat{\phi}^2(\lambda, y'): (\eta\otimes\eta) + 2 \bfi  \partial_{\lambda}^{k_0} \sum_{j=1}^{n} 
\hat{\theta}_{1j}^2(\lambda, y') \eta_j\right)y_2 e^{\lambda y_2} \\
&+ \binom{k_0}{1}\left(\partial_{\lambda}^{k_0 - 1}\hat{\phi}^2(\lambda, y') : (\eta\otimes\eta) + 2 \bfi \sum_{j=1}^{n} \partial_{\lambda}^{k_0 - 1} \hat{\theta}_{1j}^2(\lambda, y') \eta_j\right)y_2^2 e^{\lambda y_2} \\
&+ \dots + \binom{k_0}{k_0}\left(\hat{\phi}^2(\lambda, y') : (\eta\otimes\eta) + 2 \bfi \sum_{j=1}^{n} \hat{\theta}_{1j}^2(\lambda, y') \eta_j\right)y_2^{k_0 + 1}e^{\lambda y_2} \, \rmd y_2.
\end{aligned}
\end{equation*}
Setting $\lambda = 0$ and replacing $\eta$ by $-\eta$ as before, the induction hypothesis gives 
\begin{equation*} 
\int_{\mR} y_2 \left(\partial_{\lambda}^{k_0}\hat{\phi}^2(0, y') -  k_0(k_0 - 1) \varphi_{k_0 - 2}(y') \id\right) : (\eta\otimes\eta) \, \rmd y_2 = 0 
\end{equation*} 
and
\begin{equation*}
\int_{\mR} y_2 \sum_{j=1}^{n} \left(\partial_{\lambda}^{k_0}\hat{\theta}_{1j}^2(0, y') + \bfi k_0 \partial_j \phi_{k_0 - 1}(y') \right)\eta_{j} \, \rmd y_2 = 0.
\end{equation*}
Applying \cite[Theorem 2.17.2]{Sharafutdinov_book} again, there exists $\varphi_{k_0} \in C^{\infty}(\overline{D})$ with $\varphi_{k_0} = \partial_{\nu} \varphi_{k_0} = 0$ on $\partial D$ such that
\begin{equation*} 
\begin{aligned}
\partial_{\lambda}^{k_0}\hat{\phi}_{ij}^2(0, y') &= \partial_{ij}^2\varphi_{k_0}(y') + k_0(k_0 - 1)\delta_{ij}\varphi_{k_0 - 2}(y') \quad \text{and}\\
\partial_{\lambda}^{k_0} \hat{\theta}_{1j}^2(0, y') &= - \bfi k_0 \partial_j \varphi_{k_{0} - 1}(y').
\end{aligned}
\end{equation*} 
for all $i, j \in \{2, \dots, n\}$.  
This completes the proof by induction. 
\end{proof}

Adapting the approach in \cite[Lemma~2.6]{BKS_20} and \cite{FIKO_jst}, we now prove the following lemma.

\begin{lemma}\label{lem:gauge-lemma}
Let $\theta^{2}$ be the symmetric matrix given in \Cref{seq-of-fun}, with $D=(0,L)\times\cdots\times(0,L)\subset \mathbb{R}^{n-1}$ for some $L>0$. For each unit vector $\eta\in\mR^{n}$ orthogonal to the first coordinate vector $e_1\in\mR^{n}$, define the transport operator 
\begin{equation*}
T_{\eta} := 2(e_1 + \bfi\eta)\cdot\nabla. 
\end{equation*}
Assume that for all such $\eta$,  
\begin{equation*}
0= \int_{\tilde{D}} \left( \theta^2 : (e_1+\bfi\eta)\otimes(e_1+\bfi\eta) \right) a_0  b_0,  
\end{equation*}
for all $a_0,b_0\in C^{\infty}(\tilde{D})$ satisfying $T_{\eta}^{2}a_{0}=0$ and $T_{\eta}b_{0}=0$. 
Then there exist scalar functions $\psi,w\in C^{\infty}(\tilde{D})$, with $\psi = \partial_{\nu}\psi = 0$ on $\partial\tilde{D}$, such that
\begin{equation*}
\theta^2 = \nabla^{\otimes 2}\psi + w\id_{n}.
\end{equation*} 
\end{lemma}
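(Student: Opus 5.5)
We work in coordinates $x=(x_1,x')\in\tilde{D}$, with $x'=(x_2,x'')$ adapted to $\eta$ as in \Cref{seq-of-fun}. The plan is to turn the integral hypothesis into the moment condition \eqref{0.13}, apply \Cref{seq-of-fun} to obtain the sequence $\{\varphi_k\}$, and then sum the Taylor coefficients in $\lambda$ to build $\psi$ and $w$.

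\textbf{Step 1: special transport solutions.} Write $z=x_1+\bfi x_2$. Then $T_\eta=2(\partial_1+\bfi\partial_2)=4\partial_{\bar z}$, so the solutions of $T_\eta b_0=0$ are the functions holomorphic in $z$ (with arbitrary dependence on $x''$). Likewise $T_\eta^2a_0=0$ is solved by $a_0=\bar z\,h$ with $h$ holomorphic. Take
\[
b_0=e^{-\bfi\lambda z}\chi(x''),\qquad a_0=\bar z\,e^{\bfi\lambda z}\ \ \text{or}\ \ a_0=e^{\bfi\lambda z}.
\]
Then $a_0b_0=\bar z\,\chi(x'')=(x_1-\bfi x_2)\chi(x'')$ or simply $\chi(x'')$, which gives no $\lambda$-dependence. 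To get genuine Fourier information, use instead $b_0=e^{-\bfi\lambda z}$ and $a_0=\bar z$ (or $a_0=1$). The products are then $e^{-\bfi\lambda x_1}e^{\lambda x_2}$ times $\bar z$ or $1$.

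\textbf{Step 2: derive the moment conditions.} Expand
\[
\theta^2:(e_1+\bfi\eta)^{\otimes2}=\theta_{11}+2\bfi\,\theta_{1j}\eta_j-\theta^2:(\eta\otimes\eta)=\phi^2:(\eta\otimes\eta)+2\bfi\,\theta^2_{1j}\eta_j,
\]
using $|\eta|=1$ and $\phi^2=\theta_{11}\id_n-\theta^2$. Integrating in $x_1$ produces partial Fourier transforms. Combining the $a_0=\bar z$ and $a_0=1$ cases and localizing with $\chi(x'')$ should isolate the weight $y_2e^{\lambda y_2}$, which yields \eqref{0.13} for all real $\lambda$. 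There is some bookkeeping here: the $x_1$ factor in $\bar z$ becomes $\bfi\partial_\lambda$ on the transform, and it can be absorbed via the $a_0=1$ identity differentiated in $\lambda$. With \eqref{0.13} in hand, \Cref{seq-of-fun} gives $\varphi_k\in C^\infty(\overline{D})$ vanishing to first order on $\partial D$ and satisfying \eqref{0.15}--\eqref{0.16}.

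\textbf{Step 3: assemble $\psi$ and $w$.} By \Cref{rem:Paley-Wiener}, $\hat\theta^2$ is entire in $\lambda$. Define $\psi$ through its partial Fourier transform,
\[
\hat\psi(\lambda,y')=\sum_k \frac{\lambda^k}{k!}\varphi_k(y').
\]
Then \eqref{0.16} reads $\hat\theta^2_{1j}=\lambda\,\partial_j\hat\psi$, i.e.\ $\theta^2_{1j}=\partial_1\partial_j\psi$ after inverse transform, up to the sign convention. Similarly \eqref{0.15} reads $\hat\phi^2_{ij}=\partial_i\partial_j\hat\psi+\lambda^2\hat\psi\,\delta_{ij}$, i.e.\ $\phi^2_{ij}=\partial_i\partial_j\psi-\delta_{ij}\partial_1^2\psi$. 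Setting $w:=\theta_{11}-\partial_1^2\psi$ and recalling $\theta^2=\theta_{11}\id_n-\phi^2$ then gives $\theta^2=\nabla^{\otimes2}\psi+w\id_n$ entrywise.

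\textbf{Main obstacle.} The hardest part is justifying that $\psi$ is a genuine smooth function compactly supported in $x_1$, so that the boundary conditions hold on all of $\partial\tilde D$. I would avoid summing the series directly and instead use the relation $\lambda\,\partial_j\hat\psi=\bfi\hat\theta^2_{1j}$ together with analyticity. This shows that $\hat\psi(\lambda,\cdot)=\bfi\lambda^{-1}\times(\text{entire function})$ is itself entire, and that it satisfies Paley--Wiener type bounds inherited from $\hat\theta^2$. It follows that $\psi$ is supported in $[-M,M]$ in $x_1$. The boundary conditions in $y'$ come from those of the $\varphi_k$.
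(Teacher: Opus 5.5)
Your proposal is correct and follows the paper's route. You turn the hypothesis into \eqref{0.13} using solutions of $T_\eta$; your two-identity trick with $a_0\in\{1,\bar z\}$ is equivalent to the paper's single choice $a_0=y_2g(y'')e^{-\bfi\lambda(y_1+\bfi y_2)}$, $b_0\equiv1$. You then apply \Cref{seq-of-fun} and identify $\hat\psi$ through its Taylor coefficients in $\lambda$. The sign must be $\hat\psi=-\sum_k\lambda^k\varphi_k/k!$, as in the paper, for \eqref{0.15} and \eqref{0.16} to yield both $\theta^2_{1j}=\partial_1\partial_j\psi$ and $\theta^2=\nabla^{\otimes2}\psi+w\id_n$. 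The step left implicit in your ``main obstacle'' paragraph, passing from $\nabla'\hat\psi$ to $\hat\psi$, is exactly what the paper's explicit formula $\psi=\sum_{j}\int_{-\infty}^{y_1}\int_0^1 y_j\theta^2_{1j}(s,ty')\,\rmd t\,\rmd s$ supplies. It integrates radially from the corner $0\in\partial D$, where all $\varphi_k$ vanish, and gets compact support in $y_1$ directly from $\hat\theta^2_{1j}(0,\cdot)=0$ rather than from Paley--Wiener bounds.
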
 

\begin{remark*}
In particular, 
\begin{equation} 
\psi(y_1, y') := \sum_{j = 2}^{n}\int_{-\infty}^{y_1} \int_{0}^{1}y_j \theta_{1j}^2(s, ty') \mathrm{d}t \mathrm{d}s. \label{eq:smooth-psi}
\end{equation}
Moreover if $\theta^2\in C^k(\tilde{D})$, then one can obtain that $w, \psi\in C^k(\tilde{D})$.
\end{remark*}

\begin{proof}[Proof of \Cref{lem:gauge-lemma}]
Since $\eta\in\mR^{n}$ is a unit vector orthogonal to $e_1\in\mR^{n}$, we have 
\begin{equation}\label{0.2}
0= \int_{\tilde{D}} \left(\theta^2_{11} + 2 \bfi \sum_{j = 2}^{n}\theta^2_{1j}\eta_j - \sum_{i, j = 2}^{n}\theta^2_{ij}\eta_i \eta_j\right) a_{0} b_{0}. 
\end{equation}
Define, for $y \in \mathbb{R}^n$ and $\eta' \in \mathbb{R}^{n - 1}$, 
\begin{equation*}
F(y, \eta') = \theta^2_{11}(y) + 2 \bfi \sum_{j = 2}^{n}\theta^2_{1j}(y)\eta'_j - \sum_{i= 2}^{n}\sum_{j = 2}^{n}\theta^2_{ij}(y)\eta_i' \eta_j'.
\end{equation*}
Choose $a_0(y) = y_2 g(y'')e^{- \bfi \lambda (y_1 + \bfi y_2)}$ for all $y=(y_1,y_2,y'')\in\tilde{D}$ and $b_0\equiv 1$, where $g$ is an arbitrary smooth function. Substituting into \eqref{0.2} yields 
\begin{equation*}
0= \int_{\mathbb{R}^{n - 2}}\left(\int_{\mR^2} F(y_1, y_2, y'', \eta')y_2 e^{- \bfi \lambda (y_1 + \bfi y_2)} \, \rmd y_1 \, \rmd y_2 \right) g(y'') \, \rmd y''.
\end{equation*}
By the arbitrariness of $g$, we obtain $0 = \int_{\mR^2} F(y, \eta)y_2 e^{- \bfi \lambda (y_1 + \bfi y_2)} \, \rmd y_1 \, \rmd y_2$, that is, 
\begin{equation*}
0 = \int_{\mathbb{R}} \hat{F}(\lambda, y', \eta) y_2 e^{\lambda y_2} \, \rmd y_2.
\end{equation*}
where $\widehat{(\cdot)}$ denotes the partial Fourier transform in the $y_1$ variable. Equivalently, 
\begin{equation*}
0 = \int_{\mR}\left(\hat{\phi}^2(\lambda, y'):(\eta\otimes\eta) + 2 \bfi \sum_{j=1}^{n} \hat{\theta}_{1j}^2(\lambda, y') \eta_j\right)y_2e^{\lambda y_2} \,\rmd y_2,
\end{equation*}
with $\phi^2 = \theta_{11}\id - \theta^2$. 

Next, define $\psi$ as in \eqref{eq:smooth-psi}. 
By \eqref{0.16}, we have
\begin{equation*}
\psi(y_1, y')= \sum_{j = 2}^{n} \int_{0}^{1} y_j \hat{\theta^2_{1j}}(0, ty') \, \mathrm{d}t = 0 \quad \text{for all sufficiently large positive $y_1$.}
\end{equation*}
It also vanishes for large negative $y_1$ since $\supp\, \theta^2_{1j} \subset \tilde{D}$. 
For each $y'\in\overline{D}$, the function $\lambda\mapsto \hat{\psi}(\lambda,y')$ is analytic (cf. \Cref{rem:Paley-Wiener}), and it admits the expansion 
\begin{equation*}
\hat{\psi}(\lambda, y') = \sum_{k = 0}^{\infty} \frac{\tilde{\psi}_k(y')}{k!} \lambda^k.
\end{equation*}
From 
\begin{align}\label{first der of psi}
\partial_{y_1} \psi(y_1, y') = \sum_{j = 2}^{n} \int_{0}^{1} y_j \theta_{1j}^2(y_1, ty') \, \rmd t, 
\end{align}
we obtain, after taking the Fourier transform in $y_1$, 
\begin{equation*}
\bfi \lambda \hat{\psi}(\lambda, y') = \sum_{j = 2}^n \int_{0}^{1} y_j \hat{\theta}_{1j}^2(\lambda, ty') \, \rmd t.
\end{equation*}
Differentiating $(k + 1)$-times in $\lambda$ and evaluating at $\lambda = 0$, we deduce 
\begin{equation*} 
\begin{aligned}
& \bfi (k + 1) (\hat{\psi})^{(k)}(0, y') = \sum_{j = 2}^{n} \int_{0}^{1} y_j (\hat{\theta}_{1j}^2)^{(k + 1)}(0, ty') \, \rmd t\\
& \quad \overset{\eqref{0.16}}{=} -\bfi (k + 1)\sum_{j = 2}^{n} \int_{0}^{1} y_j \partial_j \phi_{k}(ty') \, \rmd t = -\bfi (k + 1) \int_{0}^{1} \frac{d}{dt}\phi_{k}(ty') \, \rmd t = - \bfi (k + 1) \phi_{k}(y'), 
\end{aligned}
\end{equation*}
and hence 
\begin{equation*}
\tilde{\psi}_k(y') = (\hat{\psi})^k(0, y') = - \phi_{k}(y') \quad \text{for all $k \geq 0$ and all $y' \in \overline{D}$.}
\end{equation*}
Therefore, 
\begin{equation*}
\hat{\psi}(\lambda, y') = -\sum_{k = 0}^{\infty} \frac{\phi_k(y')}{k!}\lambda^k. 
\end{equation*}
Using \eqref{0.15}, we compute 
\begin{equation*} 
\begin{aligned}
& \hat{\phi}_{ij}^2(\lambda, y') = \sum_{k = 0}^{\infty} \frac{\partial_{\lambda}^k \hat{\phi_{ij}^2}(0, y')}{k!} \lambda^k = \sum_{k = 0}^{\infty} \frac{\partial_{ij}^2 \phi_k(y')}{k!} \lambda^k + \sum_{k = 0}^{\infty} \frac{\delta_{ij} \phi_{k - 2}(y')}{(k - 2)!}\lambda^k\\
& \quad = - \partial^2_{ij}\hat{\psi}(\lambda, y') - \lambda^2 \delta_{ij} \hat{\psi}(\lambda, y') \quad \text{for all $i, j \in \{2, \dots, n\}$}. 
\end{aligned}
\end{equation*} 
Taking inverse Fourier transforms yields 
\begin{equation}\label{sym}
\theta^2_{ij} = \partial_{ij}^2 \psi + \delta_{ij}(\theta^2_{11} - \partial_{11}^2 \psi) \quad \text{for all $i, j \in \{2, \dots, n\}$}.
\end{equation}
Moreover, since $\partial_{\lambda}^{k_0}{(\partial_{\ell} \hat{\theta}_{1j}^2 - \partial_{j} \hat{\theta}_{1\ell}^2)}(0, y') = 0$ for all $k_0\ge 0$, from \eqref{0.16} we have 
\begin{align}\label{nec. cond.}
\partial_{\ell} \theta^2_{1j} = \partial_{j} \theta^2_{1\ell} \quad \text{for all $j, \ell \in \{2, \dots, n\}$.}
\end{align}
Combining this with \eqref{first der of psi} and \eqref{nec. cond.}, we obtain 
\begin{equation*}
 \theta^2_{1j} = \partial^2_{1j} \psi + \delta_{1j}(\theta^2_{11} - \partial_{11}^2 \psi) \quad \text{for all $j=2,\cdots,n$}. 
\end{equation*}
Together with \eqref{sym}, this completes the proof. 
\end{proof}

We now choose an orthonormal frame 
\begin{equation*}
\left\{ \eta_{1}=e_{1} , \eta_{2} ,\cdots ,\eta_{n} \right\}, 
\end{equation*}
and associated transport operator 
\begin{equation*}
T:=2(e_{1}+\bfi\eta_{2})\cdot\nabla 
\end{equation*}
where $\{\eta_{2},\cdots,\eta_{n}\}$ are unit vectors lying in the hyperplane perpendicular to $e_{1}$. Before proving \Cref{th_desnity}, we require the construction of special solutions, as stated in the following lemma: 

\begin{lemma}\label{lem:CGO-linear-phase} 
We define the associated transport operator $T:=2(e_{1}+\bfi\eta_{2})\cdot\nabla$ and let $h>0$ be a sufficiently small parameter. 
\begin{enumerate}
\renewcommand{\labelenumi}{\theenumi}
\renewcommand{\theenumi}{\rm (\alph{enumi})} 
\item\label{itm:CGO-a} For each $m\in\mN$, let $a_{0},\cdots,a_{m-1}\in C^{\infty}(\overline{\Omega})$ solve 
\begin{equation*}
Ta_{j} = \Delta a_{j-1} \text{ in $\Omega$ for all $j=0,\cdots,m-1$} 
\end{equation*}
with the convention $a_{-1}\equiv 0$. Then there exists a remainder term $r(\cdot;h)\in H_{[h]}^{2}(\Omega)$ such that 
\begin{equation*}
w(x,h)=e^{\frac{-x\cdot(e_{1}+\bfi\eta_{2})}{h}}(A_{m}(x;h)+r(x;h)) \quad \text{with} \quad A_{m}(x;h) = \sum_{j=0}^{m-1}h^{j}a_{j}(x) 
\end{equation*}
is harmonic and satisfies 
\begin{equation*}
\norm{r(\cdot;h)}_{H_{[h]}^{2}(\Omega)} \le Ch^{m}
\end{equation*}
for some positive constant $C$ independent of $h$ and $m$, but depends on $\Omega$, $\eta_{2}$ and $a_{m-1}$. 
\item\label{itm:CGO-b} For each $m\in\mN$, let $b_{0},\cdots,b_{m-1}\in C^{\infty}(\overline{\Omega})$ solve 
\begin{equation*}
T^{2}b_{j} = -2\Delta T b_{j-1} - \Delta^{2}b_{j-2} \text{ in $\Omega$ for all $j=0,\cdots,m-1$} 
\end{equation*}
with the convention $b_{-2}\equiv b_{-1}\equiv 0$. Then there exists a remainder term $\tilde{r}(\cdot;h)\in H_{[h]}^{4}(\Omega)$ such that 
\begin{equation*}
\tilde{w}(x,h)=e^{\frac{x\cdot(e_{1}+\bfi\eta_{2})}{h}}(B_{m}(x;h)+\tilde{r}(x;h)) \quad \text{with} \quad B_{m}(x;h) = \sum_{j=0}^{m-1}h^{j}b_{j}(x) 
\end{equation*}
is biharmonic and satisfies 
\begin{equation*}
\norm{\tilde{r}(\cdot;h)}_{H_{[h]}^{4}(\Omega)} \le Ch^{m}
\end{equation*}
for some positive constant $C$ independent of $h$ and $m$, but depends on $\Omega$, $\eta_{2}$, $b_{m-2}$ and $b_{m-1}$.  
\end{enumerate}
\end{lemma}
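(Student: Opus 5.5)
The plan is the standard semiclassical WKB construction: conjugate by the exponential, solve the transport hierarchy exactly, and absorb what remains into a remainder obtained from a Carleman-type solvability estimate for the conjugated operator. Write $\rho = e_1 + \bfi\eta_2$; since $e_1\perp\eta_2$ and both are unit vectors, $\rho\cdot\rho = 0$.

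For \ref{itm:CGO-a}, we compute the conjugated Laplacian
\begin{equation*}
h^{2}e^{x\cdot\rho/h}\Delta\bigl(e^{-x\cdot\rho/h}a\bigr) = h^{2}\Delta a - hT a ,
\end{equation*}
where $T = 2\rho\cdot\nabla$; the term $\rho\cdot\rho\,a$ drops out because $\rho\cdot\rho=0$. Inserting $A_m = \sum_{j<m} h^{j}a_{j}$, the powers $h^{j+1}$ for $j\le m-1$ cancel exactly because of the transport equations $Ta_j = \Delta a_{j-1}$, where $Ta_0 = 0$. The only term left is $h^{m+1}\Delta a_{m-1}$. Harmonicity of $w$ is therefore equivalent to
\begin{equation*}
h^{2}e^{x\cdot\rho/h}\Delta\bigl(e^{-x\cdot\rho/h}r\bigr) = -h^{m+1}\Delta a_{m-1}.
\end{equation*}

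To solve this, we extend $\Omega$ to a slightly larger smooth domain and use the linear weight $\varphi(x) = -x\cdot e_1$, which is a limiting Carleman weight for $\Delta$. The conjugated operator then satisfies the semiclassical solvability estimate of Kenig--Sj\"ostrand--Uhlmann / Dos Santos Ferreira et al. For every $f\in L^2$ there is a solution $r$ with
\begin{equation*}
\norm{r}_{H^{1}_{[h]}} \lesssim h^{-1}\norm{f}_{L^2}.
\end{equation*}
Here $f = -h^{m+1}\Delta a_{m-1}$, so $\norm{f}_{L^2}\lesssim h^{m+1}$. To upgrade to $H^2_{[h]}$, we combine elliptic regularity for $h^2\Delta$ with the identity $h^2\Delta r = f + hTr$, applied on an interior subdomain after localizing with a cutoff. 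This gives $\norm{r}_{H^2_{[h]}(\Omega)}\lesssim h^{m}$, with a constant depending only on $\Omega$, $\eta_2$ and $a_{m-1}$.

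For \ref{itm:CGO-b}, we conjugate $h^{4}\Delta^{2}$, which is the square of the previous conjugated operator:
\begin{equation*}
h^{4}e^{-x\cdot\rho/h}\Delta^{2}\bigl(e^{x\cdot\rho/h}b\bigr) = (h^{2}\Delta + hT)^{2}b = h^{2}T^{2}b + 2h^{3}\Delta T b + h^{4}\Delta^{2}b .
\end{equation*}
The sign of $T$ is flipped relative to part~\ref{itm:CGO-a} because of the opposite exponent. Inserting $B_m$, the recursions $T^{2}b_j = -2\Delta T b_{j-1} - \Delta^{2}b_{j-2}$ cancel all orders $h^{j+2}$ with $j\le m-1$. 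What remains is
\begin{equation*}
g = h^{m+2}\bigl(2\Delta T b_{m-1} + \Delta^{2}b_{m-2}\bigr) + h^{m+3}\Delta^{2}b_{m-1},
\end{equation*}
so $\norm{g}_{L^2}\lesssim h^{m+2}$.

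We then solve $P_h^{2}\tilde r = -g$, where $P_h = h^2\Delta + hT$. This is done by applying the solvability result twice: first solve $P_h v = -g$ with $\norm{v}\lesssim h^{m+1}$, then solve $P_h\tilde r = v$ with $\norm{\tilde r}\lesssim h^{m}$. Each step gains a factor $h^{-1}$ and is carried out on a nested sequence of enlarged domains. The same elliptic bootstrap, now applied to $h^4\Delta^2$, gives the $H^{4}_{[h]}$ bound $\lesssim h^m$, with a constant depending on $b_{m-2}$ and $b_{m-1}$.

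The main obstacle is the remainder estimate. We need to justify the Carleman solvability estimate with a loss of only $h^{-1}$ and to upgrade it to the higher semiclassical norms $H^2_{[h]}$ and $H^4_{[h]}$ up to the boundary of $\Omega$. The boundary regularity is handled by working on a larger domain $\tilde\Omega\Supset\Omega$ and restricting at the end. For the higher norms, we first try the semiclassical elliptic estimate
\begin{equation*}
\norm{u}_{H^{k+2}_{[h]}(\Omega)} \lesssim \norm{h^2\Delta u}_{H^{k}_{[h]}(\tilde\Omega)} + \norm{u}_{L^2(\tilde\Omega)} .
\end{equation*}
It applies here because $hT$ is a first-order operator carrying a factor $h$, so it is bounded as a map $H^{k+1}_{[h]}\to H^{k}_{[h]}$ and can be absorbed into the right-hand side.
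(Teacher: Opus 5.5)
Your proposal is correct and follows essentially the paper's route: the paper proves this lemma as the linear-phase case of \Cref{prop:CGO-general}, conjugating by the exponential, letting the transport hierarchy cancel every order except the top one, and obtaining the remainder from the Carleman-based solvability result.

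The only difference is in that last step. The paper quotes \Cref{lem:existence} with the real weight and then multiplies by $e^{-\bfi\psi/h}$; that lemma already delivers the $H^{2k}_{[h]}$ bounds for $k=1,2$. You instead rebuild those bounds from the second-order $H^{1}_{[h]}$ solvability, by factoring the conjugated bi-Laplacian as $P_h^{2}$ and bootstrapping interior elliptic estimates on nested enlarged domains. In that bootstrap, $hT$ is a lower-order term controlled by the norm already obtained on the larger domain, rather than literally absorbed.
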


We postpone the proof of \Cref{lem:CGO-linear-phase} to \Cref{sec:CGO-proof}. Indeed, \Cref{lem:CGO-linear-phase} is a special case of \Cref{prop:CGO-general}, corresponding to the choice $(\varphi,\psi)=(-e_{1}\cdot x , -\eta_{2}\cdot x)$ for \Cref{lem:CGO-linear-phase}\ref{itm:CGO-a} and $(\varphi,\psi)=(e_{1}\cdot x , \eta_{2}\cdot x)$ for \Cref{lem:CGO-linear-phase}\ref{itm:CGO-b}. With \Cref{lem:CGO-linear-phase} at hand, we can now prove \Cref{th_desnity} for $n\ge 3$ using the strategy described in \cite[Remark~5.4]{SS_linearized}. 

\begin{proof}[Proof of \Cref{th_desnity} for $n\ge 3$]
Substituting $u=\tilde{w}$ and $v=w$, where $w$ and $\tilde{w}$ are complex geometric optics (CGO) solutions from \Cref{lem:CGO-linear-phase} with $m=4$, into \eqref{eq:integral-identity} yields 
\begin{equation}
\begin{aligned}
0 &= \int_{\Omega} \left[ \theta^{2}:\nabla^{\otimes 2} \left(e^{\frac{x\cdot(e_{1}+\bfi\eta_{2})}{h}}(b_{0}(x)+b_{1}(x)h+b_{2}(x)h^{2}+b_{3}(x)h^{3} + \tilde{r}(x;h))\right) \right. \\ 
& \quad + \theta^{1}\cdot\nabla \left(e^{\frac{x\cdot(e_{1}+\bfi\eta_{2})}{h}}(b_{0}(x)+b_{1}(x)h+b_{2}(x)h^{2}+b_{3}(x)h^{3} + \tilde{r}(x;h))\right) \\ 
& \quad \left. + \theta^{0}e^{\frac{x\cdot(e_{1}+\bfi\eta_{2})}{h}}(b_{0}(x)+b_{1}(x)h+b_{2}(x)h^{2}+b_{3}(x)h^{3} + \tilde{r}(x;h)) \right] \times \\ 
& \qquad \times e^{\frac{-x\cdot(e_{1}+\bfi\eta_{2})}{h}}(a_{0}(x)+a_{1}(x)h+a_{2}(x)h^{2}+a_{3}(x)h^{3} + r(x;h)) \, \rmd x 
\end{aligned} \label{eq:n3-eqn1}
\end{equation}
From now on, we divide the proof into three steps, based on the different powers of $h$.  

\medskip 

\noindent \emph{We begin with the $O(h^{-2})$ term.} To this end, we multiply \eqref{eq:n3-eqn1} by $h^{2}$ and then take the limit $h\rightarrow 0_{+}$ to conclude 
\begin{equation*}
0 = \int_{\Omega} \theta^{2}:((e_{1}+\bfi\eta_{2})\otimes(e_{1}+\bfi\eta_{2})) a_{0}b_{0} 
\end{equation*}
 We now utilize the \Cref{lem:gauge-lemma} and assume that  W.L.O.G  $\Omega\subset \tilde{D}$. This can be achieved via a translation, since $\Omega$ is bounded.
Using \Cref{lem:gauge-lemma}, there exist scalar functions $\psi,w\in C^{\infty}(\tilde{D})$, with $\psi = \partial_{\nu}\psi = 0$ on $\partial\tilde{D}$, such that
\begin{equation}
\theta^2 = \nabla^{\otimes 2}\psi + w\id_{n}. \label{2-tensor-gauge} 
\end{equation} 

\medskip 

\noindent \emph{We now turn to the $O(h^{-1})$ term.} To this end, we multiply \eqref{eq:n3-eqn1} by $h$ and then take the limit $h\rightarrow 0_{+}$ to conclude 
\begin{equation*}
\begin{aligned}
0 &= \int_{\Omega} \theta^{2}:((e_{1}+\bfi\eta_{2})\otimes(e_{1}+\bfi\eta_{2})) (a_{0}b_{1}+a_{1}b_{0}) \\
&\quad + 2\theta^{2}:((e_{1}+\bfi\eta_{2})\otimes\nabla a_0) b_0 + \theta^{1}\cdot(e_{1}+\bfi\eta_{2}) a_{0}b_{0}, 
\end{aligned}
\end{equation*}
provided $T^{2}a_{0}=0$, $T^{2}a_{1} = -T\Delta a_{0}$, $Tb_{0}=0$ and $Tb_{1}=\Delta b_{0}/2$. Note that the above integral identity is actually over $\tilde{D}$, as the coefficients are supported in $\overline{\Omega}$.  By \eqref{2-tensor-gauge}, we obtain 
\begin{equation*}
\begin{aligned}
0 &= \int_{\tilde{D}} (\nabla^{\otimes 2}\psi) : ((e_{1}+\bfi\eta_{2})\otimes(e_{1}+\bfi\eta_{2}))(a_{0}b_{1}+a_{1}b_{0}) + \int_{\tilde{D}} \theta^{1}\cdot(e_{1}+\bfi\eta_{2})a_{0}b_{0} \\ 
&\quad + 2\int_{\tilde{D}} (\nabla^{\otimes 2}\psi):((e_{1}+\bfi\eta_{2})\otimes\nabla a_{0})b_{0} + 2\int_{\tilde{D}} wTa_{0}b_{0}. 
\end{aligned}
\end{equation*}
By integrating by parts, we obtain 
\begin{equation}
\begin{aligned}
0 &= \int_{\tilde{D}} \psi(2Ta_{0}Tb_{1} + a_{0}\overbrace{T^{2}b_{1}}^{=\,0} + T^{2}a_{1}b_{0}) + \int_{\tilde{D}} \theta^{1}\cdot(e_{1}+\bfi\eta_{2})a_{0}b_{0} \\ 
&\quad + \int_{\tilde{D}} (-2) \nabla\psi\cdot\nabla Ta_{0}b_{0} + 2w Ta_{0}b_{0}. 
\end{aligned} \label{eq:gauge-proof1}
\end{equation}
Next, we choose $a_0$ such that $T a_0 = 0$, in which case the above expression simplifies to 
\begin{equation*}
0 = \int_{\tilde{D}} \theta^{1}\cdot(e_{1}+\bfi\eta_{2})a_{0}b_{0} 
\end{equation*}
because $T^{2}a_{1} = -T\Delta a_{0}=0$. Using the arguments used in  \cite{SS_linearized,Krupchy_Uhl_CMP}, we can show that 
\begin{equation}
\theta^{1} = \nabla\varphi \label{1-tensor-gauge} 
\end{equation}
for some smooth function $\varphi$ with $\varphi|_{\partial \tilde{D}}=0$. Substituting this into \eqref{eq:gauge-proof1} yields 
\begin{equation*} 
0 = \int_{\tilde{D}} \psi(2Ta_{0}Tb_{1} + T^{2}a_{1}b_{0}) + (2w-\varphi) Ta_{0}b_{0} -2 \int_{\tilde{D}} \nabla\psi\cdot\nabla Ta_{0}b_{0} . 
\end{equation*}
An integration by parts yields 
\begin{equation*}
0 = \int_{\tilde{D}} \psi(Ta_{0}\Delta b_{0} - \Delta Ta_{0}b_{0}) + \int_{\tilde{D}}(2w-\varphi)Ta_{0}b_{0} + \int_{\tilde{D}} 2\psi\Delta Ta_{0}b_{0} + 2\psi \nabla Ta_{0}\cdot \nabla b_{0}. 
\end{equation*} 
Substituting $Ta_{0}=e^{-\bfi\lambda(y_{1}+\bfi y_{2})}$ for $\lambda\neq 0$ and $b_{0}=g(y'')$, with $g$ an arbitrary smooth function, into the above equation yields 
\begin{equation}
0=\int_{\tilde{D}}\psi \Delta g(y'') e^{-\bfi\lambda(y_{1}+\bfi y_{2})} + (2w-\varphi)g(y'')e^{-\bfi\lambda}(y_{1}+\bfi y_{2}). \label{eq:gauge-proof2}
\end{equation}

\medskip 

\noindent \emph{Finally, we now turn to the $O(h^{-1})$ term.} To this end, we pass to the limit $h\rightarrow 0$ in \eqref{eq:n3-eqn1}, which yields 
\begin{equation*}
\begin{aligned}
0 &= \int_{\Omega} \theta^{2}:((e_{1}+\bfi\eta_{2})\otimes(e_{1}+\bfi\eta_{2}))(a_{0}b_{2}+a_{1}b_{1}+a_{2}b_{1}) \\
&\quad + \int_{\Omega} 2\theta^{2}:((e_{1}+\bfi\eta_{2})\otimes(\nabla a_{0}b_{1} + \nabla a_{1}b_{0})) \\ 
&\quad + \int_{\Omega} (\theta^{2} : \nabla^{\otimes 2}a_{0}) b_{0} + \theta^{1}\cdot(e_{1}+\bfi\eta_{2})(a_{0}b_{1}+a_{1}b_{0}) + \theta^{1}\cdot\nabla a_{0} b_{0} + \theta^{0}a_{0}b_{0}. 
\end{aligned}
\end{equation*}
Note that the above integral identity is actually over $\tilde{D}$, as the coefficients are supported in $\overline{\Omega}$.
Plugging \eqref{2-tensor-gauge} and \eqref{1-tensor-gauge} into the above equation, we obtain 
\begin{equation}
\begin{aligned}
0 &= \int_{\tilde{D}}\psi(2Ta_{0}Tb_{2} + a_{0}T^{2}b_{2}) - 2\nabla\psi\cdot(\nabla a_{0} Tb_{1} + \nabla Ta_{0} b_{1} + \nabla Ta_{1} b_{0}) \\
&\quad + \int_{\tilde{D}} \nabla^{\otimes 2}\psi : \nabla^{\otimes 2}a_{0}b_{0} \\ 
&\quad + \int_{\tilde{D}} 2w(Ta_{0}b_{1} + Ta_{1}b_{0}) + w\Delta a_{0}b_{0} - \varphi(Ta_{0}b_{1}+a_{0}Tb_{1} + Ta_{1}b_{0}) \\ 
&\quad +\int_{\tilde{D}} (-\varphi\Delta a_{0}b_{0} - \varphi \nabla a_{0}\cdot\nabla b_{0} + \theta^{0}a_{0}b_{0}). 
\end{aligned} \label{eq:gauge-proof3}
\end{equation}
Substituting $a_{0}\equiv 0$, $Ta_{1}=1$ and $b_{0}=e^{-\bfi\lambda(y_{1}+\bfi y_{2})}g(y'')$, with $g$ an arbitrary smooth function, into the above equatio yields 
\begin{equation*}
0 = \int_{\tilde{D}} (2w-\varphi)e^{-\bfi\lambda(y_{1}+\bfi y_{2})}g(y''). 
\end{equation*}
Using results from \cite{DKSU,SS_linearized} we conclude that $ \varphi \equiv 2w$. Now, \eqref{1-tensor-gauge} becomes 
\begin{equation*}
\theta^{1} = 2\nabla w 
\end{equation*}
and \eqref{eq:gauge-proof2} reduces to 
\begin{equation*}
0=\int_{\tilde{D}}\psi \Delta g(y'') e^{-\bfi\lambda(y_{1}+\bfi y_{2})}. 
\end{equation*}
By arbitrariness of $g$, we conclude that $\psi \equiv 0$, and \eqref{2-tensor-gauge} reduces to   
\begin{equation*}
\theta^2 = w\id_{n}.  
\end{equation*} 
At this stage, \eqref{eq:gauge-proof3} simplifies to 
\begin{equation*}
\begin{aligned}
0 &= \int_{\tilde{D}} -w\Delta a_{0}b_{0} - 2wa_{0}Tb_{1}  - 2w \nabla a_{0}\cdot\nabla b_{0} + \theta^{0}a_{0}b_{0} \\ 
&= \int_{\tilde{D}} -w\Delta a_{0}b_{0} - wa_{0}\Delta b_{0} - 2w \nabla a_{0}\cdot\nabla b_{0} + \theta^{0}a_{0}b_{0} 
\end{aligned} 
\end{equation*}
since $2Tb_{1}=\Delta b_{0}$. If we choose $Ta_{0}=0$ and $Tb_{0}=0$, then an integration by parts yields 
\begin{equation*}
0 = \int_{\tilde{D}}(\theta^{0}-\Delta w)a_{0}b_{0}=0. 
\end{equation*}
Choosing $a_{0}=g(y'')$ and $b_{0}=e^{-\bfi\lambda(y_{1}+\bfi y_{2})}$ for any $\lambda\neq 0$, with $g$ an arbitrary smooth function, we conclude that $\theta^{0}=\Delta w$, thereby completing the proof of the theorem. 
\end{proof}

Before proving \Cref{th_desnity} for $n=2$, we first recall the following stationary phase result. 
  
\begin{lemma}[{\cite[Proposition 2.3]{Grigis_book}}]\label{Stationary phase} Let $a \in C_c^{2N + 3}(\mathbb{R}^2)$ and let $A$ be a real, non-singular, and symmetric matrix. Then 
\begin{equation*}
\int_{\mR^2} e^{\frac{\bfi}{2h}y\cdot Ay}a(y)dy = 2 \pi h\frac{e^{\bfi\frac{\pi}{4}\operatorname{sgn}A}}{|\operatorname{det}A|^{1/2}}\left( \sum_{k = 0}^{N - 1} \frac{h^k}{k!} (P^ka)(0) + R_N(a, h) \right) \quad \text{as $h \to 0$},
\end{equation*}
where $P = \frac{1}{2\bfi} \langle D, A^{-1}D \rangle$, and
\begin{equation*}
|R_N(a, h)| \leq \frac{h^N}{N!} C_{A} \sum_{|\alpha| \leq 3} \norm{\partial^{\alpha} P^N a}_{L^1}.
\end{equation*}
Here, $\operatorname{sgn}{A}$ denotes for the signature of $A$, defined as the number of positive eigenvalues minus the number of negative eigenvalues. 
\end{lemma}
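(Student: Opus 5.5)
The statement is the classical stationary phase formula in two dimensions with a quantitative remainder. I would prove it by diagonalising $A$ and working on the Fourier side, where the oscillatory Gaussian becomes an explicit multiplier.

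\emph{Reduction to a Fourier integral.} I would use Parseval's identity for the Gaussian $e^{\frac{\bfi}{2h}y\cdot Ay}$, which is a tempered distribution. Its Fourier transform can be computed by analytic continuation from the real Gaussian $e^{-\frac{1}{2h}y\cdot(\epsilon I - \bfi A)y}$ with $\epsilon\to 0_+$. After an orthogonal change of variables we may take $A$ diagonal, and the formula factorises into two one-dimensional Fresnel integrals. The result is
\begin{equation*}
\mathcal{F}\big(e^{\frac{\bfi}{2h}y\cdot Ay}\big)(\xi) = 2\pi h\,\frac{e^{\bfi\frac{\pi}{4}\operatorname{sgn}A}}{\abs{\det A}^{1/2}}\, e^{-\frac{\bfi h}{2}\xi\cdot A^{-1}\xi}.
\end{equation*}
Parseval then gives
\begin{equation*}
\int_{\mR^{2}} e^{\frac{\bfi}{2h}y\cdot Ay}a(y)\,\rmd y = 2\pi h\,\frac{e^{\bfi\frac{\pi}{4}\operatorname{sgn}A}}{\abs{\det A}^{1/2}}\cdot\frac{1}{(2\pi)^{2}}\int_{\mR^{2}} e^{-\frac{\bfi h}{2}\xi\cdot A^{-1}\xi}\,\hat a(\xi)\,\rmd\xi .
\end{equation*}
Since $a\in C_c^{2N+3}$, $\hat a$ decays like $\br{\xi}^{-2N-3}$, so this integral converges absolutely.

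\emph{Taylor expansion of the multiplier.} I would expand $e^{\bfi t}=\sum_{k<N}(\bfi t)^{k}/k!+\rho_N(t)$ with $\abs{\rho_N(t)}\le \abs{t}^{N}/N!$, taking $t=-\tfrac{h}{2}\xi\cdot A^{-1}\xi$. The polynomial $-\frac{\bfi}{2}\xi\cdot A^{-1}\xi$ is the symbol of $P=\frac{1}{2\bfi}\br{D,A^{-1}D}$. Hence each term $k<N$ returns $\frac{h^{k}}{k!}(2\pi)^{-2}\int \widehat{P^{k}a}(\xi)\,\rmd\xi = \frac{h^{k}}{k!}(P^{k}a)(0)$ by Fourier inversion at $0$.

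\emph{Remainder.} This is the main obstacle, because one must keep only $L^{1}$ norms of at most three derivatives of $P^{N}a$. I would write the remainder as
\begin{equation*}
R_N=(2\pi)^{-2}\int \frac{\rho_N(t)}{t^{N}}\,t^{N}\hat a\,\rmd\xi,
\end{equation*}
noting that $t^{N}\hat a = c\,h^{N}\widehat{P^{N}a}$ with $\abs{c}=1$. Then
\begin{equation*}
\abs{R_N}\le \frac{h^{N}}{N!}\,\norm{\widehat{P^{N}a}}_{L^{1}(\mR^{2})}.
\end{equation*}
Finally, since $\br{\xi}^{-3}\in L^{1}(\mR^{2})$,
\begin{equation*}
\norm{\hat f}_{L^{1}}\le \sup_\xi \br{\xi}^{3}\abs{\hat f(\xi)}\int_{\mR^{2}}\br{\xi}^{-3}\,\rmd\xi \le C\sum_{\abs{\alpha}\le 3}\norm{\partial^{\alpha}f}_{L^{1}}.
\end{equation*}
Applying this with $f=P^{N}a\in C_c^{3}$ yields the stated bound, with $C_A$ absorbing the normalisation constant $\abs{\det A}^{1/2}$, which is fixed once $A$ is. Only $2N+3$ derivatives of $a$ are used, which matches the hypothesis $a\in C_c^{2N+3}$.
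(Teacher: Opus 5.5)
Your proof is correct. It is essentially the standard argument behind the result the paper cites from Grigis--Sj\"ostrand; the paper itself gives no proof. You compute the Fourier transform of the quadratic Gaussian, apply Parseval, Taylor-expand the multiplier $e^{-\frac{\bfi h}{2}\xi\cdot A^{-1}\xi}$ using $\abs{e^{\bfi t}-\sum_{k<N}(\bfi t)^{k}/k!}\le \abs{t}^{N}/N!$, and control $\norm{\widehat{P^{N}a}}_{L^{1}}$ by $\sum_{\abs{\alpha}\le 3}\norm{\partial^{\alpha}P^{N}a}_{L^{1}}$.

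Three minor points, none of which affects the argument. Parseval strictly pairs the Gaussian's Fourier transform with $\hat a(-\xi)$ rather than $\hat a(\xi)$, which is harmless because the multiplier is even. You dropped a factor $(2\pi)^{-2}$ in the remainder bound, which only weakens it. Since $\abs{\det A}^{-1/2}$ is already pulled out in front, the remainder constant can actually be taken independent of $A$, so there is nothing for $C_A$ to absorb.
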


We now ready to prove \Cref{th_desnity} for $n=2$. 

\begin{proof}[Proof of \Cref{th_desnity} for $n=2$]
Throughout the proof, we simplify notation by identifying $z=x+\bfi y\cong (x,y)\in\mR^{2}$. Fix any $z_0\in\Omega$ and define $\phi(z)=(z-z_0)^{2}$. 

\medskip 

\noindent \emph{First, we choose $u(z) = {\overline{(z-z_0)}}e^{\frac{\phi}{h}}$ and $v(z) = e^{\frac{-\overline{\phi}}{h}}$ into \eqref{eq:integral-identity}.} 
Since 
\begin{equation*}
\partial_i u =  \left((e_1 - \bfi e_2)_i  + \frac{2(e_1 + \bfi e_2)_i |z-z_0|^2}{h}\right) e^{\frac{\phi}{h}}
\end{equation*}
and
\begin{equation*}
\partial^2_{ij}u = \left(\begin{aligned}
& \frac{4(e_1 - \bfi e_2)_i(e_1 + \bfi e_2)_j (z-z_0)}{h} \\
&\quad + \frac{2 (e_1 + \bfi e_2)_i(e_1 + \bfi e_2)_j \overline{(z-z_0)}}{h} \\ 
&\quad + \frac{4 (e_1 + \bfi e_2)_i(e_1 + \bfi e_2)_j (z-z_0)|z-z_0|^2}{h^2}
\end{aligned}\right)e^{\frac{\phi}{h}}
\end{equation*}
for all $i,j\in\{1,\cdots,n\}$, \eqref{eq:integral-identity} becomes 
\begin{equation*} 
\begin{aligned}
0 &= \int_{\mR^2} \sum_{i,j=1}^{2}\theta^2_{ij} e^{\frac{\phi - \bar{\phi}}{h}}\left(\begin{aligned}
& \frac{4(e_1 - \bfi e_2)_i(e_1 + \bfi e_2)_j (z-z_0)}{h^2} \\ 
&\quad + \frac{2 (e_1 + \bfi e_2)_i(e_1 + \bfi e_2)_j \overline{(z-z_0)}}{h^2} \\ 
&\quad + \frac{4 (e_1 + \bfi e_2)_i(e_1 + \bfi e_2)_j (z-z_0)|z-z_0|^2}{h^3}
\end{aligned} \right)\\
& \quad + \int_{\mR^2} \sum_{i=1}^{2}\theta_i^1 e^{\frac{\phi - \bar{\phi}}{h}}\left(\frac{(e_1 - \bfi e_2)_i}{h}  + \frac{2(e_1 + \bfi e_2)_i |z-z_0|^2}{h^2}\right) + \frac{\theta^{0} \overline{(z-z_0)}}{h}e^{\frac{\phi - \bar{\phi}}{h}}. 
\end{aligned} 
\end{equation*} 
Applying \Cref{Stationary phase}, we obtain 
\begin{equation*}
\begin{aligned}
0&= \sum_{i,j=1}^{2} 4(e_1 - \bfi e_2)_i(e_1 + \bfi e_2)_jP((z-z_0) \theta_{ij}^2)(z_0) \\
&\quad + \sum_{i,j=1}^{2} 2 (e_1 + \bfi e_2)_i(e_1 + \bfi e_2)_j P(\overline{(z-z_0)}\theta_{ij}^2)(z_0) \\
& \quad + \sum_{i,j=1}^{2} 4 (e_1 + \bfi e_2)_i(e_1 + \bfi e_2)_j\frac{P^2}{2}(\theta_{ij}^2 (z-z_0)|z-z_0|^2)(z_0) \\
&\quad + \sum_{i=1}^{2}(e_1 - \bfi e_2)_i \theta_i^1(z_0) + \sum_{i=1}^{2} 2(e_1 + \bfi e_2)_i P(|z-z_0|^2 \theta_i^1)(z_0) 
\end{aligned}
\end{equation*} 
where $P = -\frac{1}{4\bfi} \frac{\partial^2}{\partial x \partial y} = \frac{\bfi}{4}\frac{\partial^2}{\partial x \partial y}$.
Consequently, 
\begin{equation*} 
\begin{aligned}
0 &= \sum_{i,j=1}^{2}(e_1 - \bfi e_2)_i(e_1 + \bfi e_2)_j(-\partial_x \theta_{ij}^2 + \bfi \partial_y \theta_{ij}^2)(z_0) \\
&\quad + \sum_{i,j=1}^{2}2 (e_1 + \bfi e_2)_i(e_1 + \bfi e_2)_j \left(\frac{1}{4}(\partial_x \theta_{ij}^2 + \bfi \partial_y \theta_{ij}^2)(z_0)\right)\\
&\quad + \sum_{i,j=1}^{2}2 (e_1 + \bfi e_2)_i(e_1 + \bfi e_2)_j\left(\frac{-1}{4}(\partial_x \theta_{ij}^2 + \bfi \partial_y \theta_{ij}^2)(z_0)\right) \\
&\quad + \sum_{i=1}^{2}(e_1 - \bfi e_2)_i \theta_i^1(z_0), 
\end{aligned}
\end{equation*} 
which implies
\begin{equation*} 
\sum_{i,j=1}^{2}(e_1 - \bfi e_2)_i(e_1 + \bfi e_2)_j(-\partial_x \theta_{ij}^2 + \bfi \partial_y \theta_{ij}^2)(z_0) + \sum_{i=1}^{2}(e_1 - \bfi e_2)_i \theta_i^1(z_0) = 0.
\end{equation*}
Therefore,
\begin{equation}\label{2a}
-\partial_x (\theta_{11}^2 + \theta_{22}^2)(z_0) + \theta_1^1(z_0) + \bfi (\partial_y (\theta^2_{11} + \theta_{22}^2)(z_0) - \theta_2^1(z_0)) = 0 \quad \text{for all $z_0\in\Omega$.} 
\end{equation}

\medskip 

\noindent \emph{Next, we choose $u(z) = (z-z_0)e^{\frac{-\overline{\phi}}{h}}$ and $v(z) = e^{\frac{\phi}{h}}$ into \eqref{eq:integral-identity}.} 
Since 
\begin{equation*}
\partial_i u = \left((e_1 + \bfi e_2)_i  - \frac{2(e_1 - \bfi e_2)_i |z-z_0|^2}{h}\right)e^{\frac{-\bar{\phi}}{h}},
\end{equation*} 
and
\begin{equation*}
\partial^2_{ij}u = \left(\begin{aligned}
& -\frac{4(e_1 + \bfi e_2)_i(e_1 - \bfi e_2)_j}{h}\overline{(z-z_0)} \\ 
&\quad - \frac{2 (e_1 - \bfi e_2)_i(e_1 - \bfi e_2)_j}{h}(z-z_0) \\ 
&\quad + \frac{4 (e_1 - \bfi e_2)_i(e_1 - \bfi e_2)_j}{h^2}\overline{(z-z_0)}|z-z_0|^2
\end{aligned}\right)e^{\frac{-\bar{\phi}}{h}}.
\end{equation*} 
for all $i,j\in\{1,\cdots,n\}$, \eqref{eq:integral-identity} becomes 
\begin{equation*} 
\begin{aligned} 
0 &= \int_{\mathbb{R}^2} \sum_{i,j=1}^{2} \theta_{ij}^2 e^{\frac{\phi - \bar{\phi}}{h}}\left(\begin{aligned}
& -\frac{4(e_1 + \bfi e_2)_i(e_1 - \bfi e_2)_j}{h^2}\overline{(z-z_0)} \\ 
&\quad - \frac{2 (e_1 - \bfi e_2)_i(e_1 - \bfi e_2)_j}{h^2}(z-z_0) \\ 
&\quad + \frac{4 (e_1 - \bfi e_2)_i(e_1 - \bfi e_2)_j}{h^3}\overline{(z-z_0)}|z-z_0|^2
\end{aligned}\right)\\
& + \sum_{i=1}^{2} \theta_i^1 e^{\frac{\phi - \bar{\phi}}{h}}\left(\frac{(e_1 + \bfi e_2)_i}{h}  - \frac{2(e_1 - \bfi e_2)_i}{h^2}|z-z_0|^2\right) + \frac{\theta^0}{h}(z-z_0)e^{\frac{\phi - \bar{\phi}}{h}}. 
\end{aligned} 
\end{equation*} 
Applying \Cref{Stationary phase}, we obtain 
\begin{equation*} 
\begin{aligned}
0 &= -\sum_{i,j=1}^{2}4(e_1 + \bfi e_2)_i(e_1 - \bfi e_2)_jP(\overline{(z-z_0)}\theta_{ij}^2)(z_0) \\
&\quad - \sum_{i,j=1}^{2} 2 (e_1 - \bfi e_2)_i(e_1 - \bfi e_2)_j P((z-z_0)\theta_{ij}^2)(z_0) \\
& \quad + \sum_{i,j=1}^{2} 4 (e_1 - \bfi e_2)_i(e_1 - \bfi e_2)_j\frac{P^2}{2}(\theta_{ij}^2 \overline{(z-z_0)}|z-z_0|^2)(z_0) \\
&\quad + \sum_{i=1}^{2} (e_1 + \bfi e_2)_i \theta_i^1(z_0) - \sum_{i=1}^{2} 2(e_1 - \bfi e_2)_i P(|z-z_0|^2 a_i^1)(z_0) 
\end{aligned}
\end{equation*} 
where $P = -\frac{1}{4\bfi} \frac{\partial^2}{\partial x \partial y} = \frac{\bfi}{4}\frac{\partial^2}{\partial x \partial y}$. 
Hence
\begin{equation*} 
\begin{aligned}
0 &= -\sum_{i,j=1}^{2}(e_1 + \bfi e_2)_i(e_1 - \bfi e_2)_j(\partial_x \theta_{ij}^2 + \bfi \partial_y \theta_{ij}^2)(z_0) \\
&\quad - \sum_{i,j=1}^{2}2 (e_1 - \bfi e_2)_i(e_1 - \bfi e_2)_j \left(\frac{1}{4}(-\partial_x \theta_{ij}^2 + \bfi \partial_y \theta_{ij}^2)(z_0)\right)\\
&\quad + \sum_{i,j=1}^{2}2 (e_1 - \bfi e_2)_i(e_1 - \bfi e_2)_j\left(\frac{1}{4}(-\partial_x \theta_{ij}^2 + \bfi \partial_y \theta_{ij}^2)(z_0)\right) \\
&\quad + \sum_{i=1}^{2}(e_1 + \bfi e_2)_i \theta_i^1(z_0) 
\end{aligned} 
\end{equation*} 
which implies
\begin{equation*}
\sum_{i,j=1}^{2}(e_1 + \bfi e_2)_i(e_1 - \bfi e_2)_j(-\partial_x \theta_{ij}^2 - \bfi \partial_y \theta_{ij}^2)(z_0) + \sum_{i=1}^{2}(e_1 + \bfi e_2)_i \theta_i^1(z_0) = 0.
\end{equation*}
Therefore,
\begin{equation}\label{2b}
-\partial_x (\theta_{11}^2 + \theta_{22}^2)(z_0) + \theta_1^1(z_0) - \bfi (\partial_y (\theta_{11}^2 + \theta_{22}^2)(z_0) - \theta_2^1(z_0)) = 0 \quad \text{for all $z_0\in\Omega$}.
\end{equation}
After adding and subtracting \eqref{2a} and \eqref{2b}, we get
\begin{equation*}
\partial_{x}({\rm tr}\,(\theta^2))=\partial_x (\theta_{11}^2 + \theta_{22}^2) = \theta_1^1 ,\quad \partial_{y}({\rm tr}\,(\theta^2))=\partial_y (\theta_{11}^2 + \theta_{22}^2) = \theta_2^1 \quad \text{in $\Omega$}, 
\end{equation*}
that is, $\theta^{1} = \nabla({\rm tr}\,(\theta^2))=0$. 

\medskip 

\noindent \emph{Next, we choose $u(z) = e^{\frac{\phi}{h}}$ and $v(z) = \overline{(z-z_0)}e^{\frac{-\overline{\phi}}{h}}$ into \eqref{eq:integral-identity}.} Since 
\begin{equation*}
\partial_i u = \frac{2(e_1 + \bfi e_2)_i}{h}(z-z_0) e^{\frac{\phi}{h}}
\end{equation*}
and 
\begin{equation*}
\partial^2_{ij}u = \frac{2(e_1 + \bfi e_2)_{i}(e_1 + \bfi e_2)_{j}}{h} e^{\frac{\phi}{h}} + \frac{4(e_1 + \bfi e_2)_i (e_1 + \bfi e_2)_j}{h^2}(z-z_0)^2 e^{\frac{\phi}{h}}, 
\end{equation*}
for all $i,j\in\{1,\cdots,n\}$, \eqref{eq:integral-identity} becomes 
\begin{equation*}
\int_{\mR^2} \sum_{i,j=1}^{2}\theta_{ij}^2 e^{\frac{\phi - \bar{\phi}}{h}}\left(\begin{aligned}
& \frac{2(e_1 + \bfi e_2)_{i}(e_1 + \bfi e_2)_{j}\overline{(z-z_0)}}{h} \\ 
&\quad + \frac{4(e_1 + \bfi e_2)_i (e_1 + \bfi e_2)_j}{h^2}|z-z_0|^{2}(z-z_0)
\end{aligned}\right) = 0.
\end{equation*} 
Applying \Cref{Stationary phase}, we obtain 
\begin{equation*}
\begin{aligned}
0 &= \sum_{i,j=1}^{2} (e_1 + \bfi e_2)_i(e_1 + \bfi e_2)_j P^2(\overline{(z-z_0)}\theta_{ij}^2)(z_0) \\
&\quad + \sum_{i,j=1}^{2} 4 (e_1 + \bfi e_2)_i(e_1 + \bfi e_2)_j\frac{1}{3!}P^3(|z-z_0|^2(z-z_0) \theta_{ij}^2)(z_0) 
\end{aligned}
\end{equation*}
where $P = -\frac{1}{4\bfi} \frac{\partial^2}{\partial x \partial y} = \frac{\bfi}{4}\frac{\partial^2}{\partial x \partial y}$. Hence, we get 
\begin{equation*}
(e_1 + \bfi e_2)_i (e_1 + \bfi e_2)_j (\partial_x\Delta a^2_{ij}(z_0) - \bfi \partial_y \Delta a^2_{ij}(z_0)) = 0. 
\end{equation*}
Therefore, 
\begin{equation} 
\partial_x \Delta (\theta_{11}^2 - \theta_{22}^2)(z_0) + 2\partial_y \Delta\theta_{12}^2(z_0) + \bfi (2 \partial_x \Delta \theta_{12}^2(z_0) - \partial_y \Delta (\theta_{11}^2 - \theta_{22}^2)(z_0)) = 0  \label{2(aa)}
\end{equation}
for all $z_0\in\Omega$. 

\medskip 

\noindent \emph{Finally, we choose $u(z) = e^{\frac{- \overline{\phi}}{h}}$ and $v(z) = (z-z_0) e^{\frac{{\phi}}{h}}$ into \eqref{eq:integral-identity}.} Since 
\begin{equation*}
\partial_i u = \frac{-2(e_1 - \bfi e_2)_i}{h}\overline{(z-z_0)} e^{\frac{-\bar{\phi}}{h}}
\end{equation*}
and 
\begin{equation*}
\partial^2_{ij}u = \frac{-2(e_1 - \bfi e_2)_{i}(e_1 - \bfi e_2)_{j}}{h} e^{\frac{-\overline{\phi}}{h}} + \frac{4(e_1 - \bfi e_2)_i (e_1 - \bfi e_2)_j}{h^2}\overline{(z-z_0)}^2 e^{\frac{-\overline{\phi}}{h}}
\end{equation*}
for all $i,j\in\{1,\cdots,n\}$, \eqref{eq:integral-identity} becomes  
\begin{equation*}
\int_{\mathbb{R}^2} \sum_{i,j=1}^{2} \theta_{ij}^2 e^{\frac{\phi - \bar{\phi}}{h}}\left(\begin{aligned}
& \frac{-2(e_1 - \bfi e_2)_{i}(e_1 - \bfi e_2)_{j}(z-z_0)}{h} \\ 
&\quad + \frac{4(e_1 - \bfi e_2)_i (e_1 - \bfi e_2)_j}{h^2}|z-z_0|^2\overline{(z-z_0)} 
\end{aligned}\right) = 0.
\end{equation*}
Applying \Cref{Stationary phase}, we obtain 
\begin{equation*}
\begin{aligned} 
0 &= -\sum_{i,j=1}^{2} (e_1 - \bfi e_2)_i(e_1 - \bfi e_2)_jP^2((z-z_0) \theta_{ij}^2)(z_0) \\
&\quad + \sum_{i,j=1}^{2} 4 (e_1 - \bfi e_2)_i(e_1 - \bfi e_2)_j \frac{1}{3!}P^3(|z-z_0|^2 \overline{(z-z_0)} \theta_{ij}^2) 
\end{aligned} 
\end{equation*}
where $P = -\frac{1}{4\bfi} \frac{\partial^2}{\partial x \partial y} = \frac{\bfi}{4}\frac{\partial^2}{\partial x \partial y}$. 
Hence, we obtain 
\begin{equation*}
\sum_{i,j=1}^{2} (e_1 - \bfi e_2)_i (e_1 - \bfi e_2)_j (\partial_x\Delta \theta_{ij}^2(z_0) + \bfi \partial_y \Delta \theta_{ij}^2(z_0)) = 0.
\end{equation*}
Therefore, 
\begin{equation}
\partial_x \Delta (\theta_{11}^2 - \theta_{22}^2)(z_0) + 2\partial_y \Delta \theta_{12}^2(z_0) - \bfi (2 \partial_x \Delta \theta_{12}^2(z_0) - \partial_y \Delta (\theta_{11}^2 - \theta_{22}^2)(z_0)) = 0. \label{2(bb)}
\end{equation} 
for all $z_0\in\Omega$. By adding and subtracting \eqref{2(aa)} and \eqref{2(bb)}, we conclude 
\begin{equation*}
\partial_x \Delta (\theta_{11}^2 - \theta_{22}^2) = - 2\partial_y \Delta \theta_{12}^2, \quad
\partial_y \Delta (\theta_{11}^2 - \theta_{22}^2) = 2 \partial_x \Delta \theta_{12}^2 \quad \text{in $\Omega$.}
\end{equation*}
From the above equations, we deduce that $\theta_{11}^{2}=\theta_{22}^{2}$ and $\theta_{12}^{2}=0$. Since ${\rm tr}\,(\theta^2)=0$, it follows that $\theta^{2}=0$, completing the proof. 
\end{proof}

\appendix
\crefalias{section}{appendix}

\section{Complex geometric optics solutions\label{sec:CGO-proof}} 

The main purpose of this appendix is to refine the complex geometric optics (CGO) solutions for the equation $(-\Delta)^{k}u=0$ in a bounded smooth domain $\Omega\subset\mR^{n}$, with $n\ge 3$ and $k=1,2$, constructed in \cite[Lemma~A.4]{SS_linearized}. For any parameter $h>0$ and a nonnegative integer $m$, we define the semiclassical norm 
\begin{equation*}
\norm{u}_{H_{[h]}^{m}(\Omega)}^{2} = \sum_{\abs{\alpha}_{1}\le m} \norm{(h\partial)^{\alpha}u}_{L^{2}(\Omega)}^{2} 
\end{equation*}
where $\abs{\alpha}_{1}=\alpha_{1}+\cdots+\alpha_{n}$ for each multi-index $\alpha\in\mZ_{\ge 0}^{n}$. To make this paper self-contained, we recall the following general definition, although it is not strictly necessary: 

\begin{definition}[\cite{KEN}, see also {\cite[Definition~A.1]{SS_linearized}}]\label{def:limiting-carleman-weight}
Let $h>0$ be a given parameter, referred as the semi-classical parameter. A function $\varphi:\overline{\Omega}\rightarrow\mR$ is called a \emph{limiting Carleman weight} for the semi-classical conjugated Laplacian $P_{0,\varphi}:=e^{\frac{\varphi}{h}}(-h^{2}\Delta)e^{-\frac{\varphi}{h}}$ if the following conditions hold: 
\begin{itemize}
\item there exists an open set $\Omega_{0}\supset\overline{\Omega}$ such that $\varphi\in C^{\infty}(\Omega_{0})$; 
\item $\abs{\nabla\varphi}\neq 0$ in $\overline{\Omega}$; and 
\item $\{\Re(p_{0,\varphi}),\Im(p_{0,\varphi})\}(x,\xi)=0$ for all $(x,\xi)\in\Omega_{0}\times(\mR^{n}\setminus\{0\})$ with $p_{0,\varphi}(x,\xi)=0$, 
\end{itemize}
where $\{\cdot,\cdot\}$ denotes the Poisson bracket and 
\begin{equation*}
p_{0,\varphi}(x,\xi)=\abs{\xi}^{2}-\abs{\nabla\varphi(x)}^{2} + 2\bfi\xi\cdot\nabla\varphi(x) 
\end{equation*}
is the semi-classical principal symbol of $P_{0,\varphi}$. 
\end{definition}

\begin{example}
Standard examples of such functions $\varphi$ as described in \Cref{def:limiting-carleman-weight} include linear weights $\varphi(x)=\alpha\cdot x$ with $0\neq\alpha\in\mR^{n}$, and logarithmic weights $\varphi(x)=\log\abs{x-x_{0}}$ with $x_{0}\notin\overline{\Omega_{0}}$. 
\end{example}

We now recall an existence result in \cite{SS_linearized}: 

\begin{lemma}[{\cite[Proposition~A.3]{SS_linearized}}]\label{lem:existence}
Let $k\in\mN$. For any $v\in L^{2}(\Omega)$, for all sufficiently small $h>0$ and for all limiting Carleman weight $\varphi$ as described in \Cref{def:limiting-carleman-weight}, there exists $u\in H_{[h]}^{2k}(\Omega)$ such that 
\begin{equation*}
e^{-\frac{\varphi}{h}}(-\Delta)^{k}e^{\frac{\varphi}{h}}u=v \text{ in $\Omega$} \quad \text{satisfying} \quad \norm{u}_{H_{[h]}^{2k}(\Omega)}\le Ch^{k}\norm{v}_{L^{2}(\Omega)}
\end{equation*} 
for some positive constant $C$ independent of $h$ (but depends on $k$). 
\end{lemma}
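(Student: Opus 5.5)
The plan is to argue by duality from a semiclassical Carleman estimate with a gain of derivatives, following the approach of Kenig--Sj\"ostrand--Uhlmann \cite{KEN} and \cite{SS_linearized}. Write $P_{\pm\varphi}:=e^{\pm\frac{\varphi}{h}}(-h^{2}\Delta)e^{\mp\frac{\varphi}{h}}$. The equation to be solved is equivalent to
\begin{equation*}
(P_{-\varphi})^{k}u = h^{2k}v \text{ in $\Omega$}.
\end{equation*}
So it suffices to find $u\in H^{2k}_{[h]}(\Omega)$ solving this with $\norm{u}_{H^{2k}_{[h]}(\Omega)}\le Ch^{-k}\norm{h^{2k}v}_{L^{2}(\Omega)}$.

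\emph{Step 1: base Carleman estimate.} For the limiting Carleman weight $\varphi$ I would first pass to a slightly convexified weight $\varphi_\varepsilon=\varphi+\frac{h}{2\varepsilon}\varphi^2$, with $\varepsilon>0$ small and fixed. The third condition in \Cref{def:limiting-carleman-weight} then gives positivity of the commutator $\bfi[\Re P,\Im P]$ on the characteristic set. Combining this with the standard G\aa rding/ellipticity argument off the characteristic set gives
\begin{equation*}
h\norm{w}_{H^{1}_{[h]}}\le C\norm{P_{\varphi}w}_{L^{2}} \quad \text{for all $w\in C_c^\infty(\Omega)$}.
\end{equation*}
Since $e^{\varphi^2/(2\varepsilon)}$ is bounded above and below, we can return to $\varphi$. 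Replacing $\varphi$ by $-\varphi$ is allowed, since $-\varphi$ is also a limiting Carleman weight. Next I would prove the shifted version: for $w\in C_c^\infty(\Omega)$,
\begin{equation*}
h\norm{w}_{H^{s+1}_{[h]}(\mR^n)}\le C\norm{P_{\varphi}w}_{H^{s}_{[h]}(\mR^n)}, \quad s\in\mR,
\end{equation*}
by conjugating with $\br{hD}^{s}$ and absorbing the commutator errors, which are $O(h)$ smaller, on a slightly enlarged domain $\Omega_0$.

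\emph{Step 2: iterate.} Applying the shifted estimate $k$ times with $s=-1,-3,\dots,1-2k$, and using that $P_\varphi$ is bounded $H^{s+2}_{[h]}\to H^{s}_{[h]}$, yields the required estimate for the formal adjoint $(P_{\varphi})^{k}$ of $(P_{-\varphi})^{k}$:
\begin{equation*}
h^{k}\norm{w}_{L^{2}}\le C\norm{(P_{\varphi})^{k}w}_{H^{-2k}_{[h]}(\mR^n)}, \quad w\in C_c^\infty(\Omega).
\end{equation*}
The main technical obstacle is here. The intermediate functions $(P_\varphi)^{j}w$ are still compactly supported, so the shifted estimate applies at each stage. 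However, the shifted estimate must hold uniformly in negative Sobolev indices with constants independent of $h$, and this requires care with the cutoffs $\chi\br{hD}^{s}$.

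\emph{Step 3: duality.} On the subspace $\{(P_\varphi)^k w : w\in C_c^\infty(\Omega)\}\subset H^{-2k}_{[h]}(\mR^n)$ define the linear functional
\begin{equation*}
L\bigl((P_\varphi)^{k}w\bigr):=\br{w,h^{2k}v}_{\Omega}.
\end{equation*}
The estimate of Step 2 shows that $L$ is well defined, since $(P_\varphi)^k w=0$ forces $w=0$. It also shows that
\begin{equation*}
\abs{L}\le Ch^{-k}\norm{h^{2k}v}_{L^{2}} = Ch^{k}\norm{v}_{L^{2}}.
\end{equation*}
By Hahn--Banach, $L$ extends to $H^{-2k}_{[h]}(\mR^n)$ with the same norm. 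The Riesz representation, via the duality of $H^{-2k}_{[h]}$ and $H^{2k}_{[h]}$, then gives $u\in H^{2k}_{[h]}(\mR^n)$ with $\norm{u}_{H^{2k}_{[h]}}\le Ch^{k}\norm{v}_{L^{2}}$ and $\br{(P_\varphi)^k w,u}=\br{w,h^{2k}v}$ for all test functions $w$. This says exactly that $(P_{-\varphi})^{k}u=h^{2k}v$ in $\mathcal{D}'(\Omega)$. Restricting $u$ to $\Omega$ and dividing by $h^{2k}$ gives the stated equation and bound, with $C$ depending on $k$, $\varphi$ and $\Omega$ but not on $h$.
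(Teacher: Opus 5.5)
The paper gives no proof of this lemma; it imports it from \cite[Proposition~A.3]{SS_linearized}. Your overall route (a Carleman estimate for $P_\varphi$, shifted in the semiclassical Sobolev scale, iterated $k$ times, then Hahn--Banach duality) is the standard argument behind that result, and your Step~3 is fine.

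The gap is the derivative count in Steps~1--2. You state the shifted estimate as $h\norm{w}_{H^{s+1}_{[h]}}\le C\norm{P_\varphi w}_{H^{s}_{[h]}}$, i.e.\ a gain of one derivative, and with that the chain does not close. Taking $s=-1$ bounds $h\norm{w}_{L^2}$ by $\norm{P_\varphi w}_{H^{-1}_{[h]}}$. The next application ($s=-3$, to $P_\varphi w$) only controls the weaker quantity $\norm{P_\varphi w}_{H^{-2}_{[h]}}$. The boundedness $P_\varphi: H^{s+2}_{[h]}\to H^{s}_{[h]}$ you invoke points in the wrong direction to bridge the two. Iterating a one-derivative gain correctly gives at best $h^k\norm{w}_{L^2}\le C\norm{P_\varphi^k w}_{H^{-k}_{[h]}}$. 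Duality then produces $u\in H^{k}_{[h]}$, not the $H^{2k}_{[h]}$ bound in the statement.

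What you need is a gain of two derivatives, $h\norm{w}_{H^{s+2}_{[h]}}\le C\norm{P_\varphi w}_{H^{s}_{[h]}}$ for $w\in C_c^\infty(\Omega)$. This follows from your version plus ellipticity at high frequencies. Since $P_\varphi=-h^2\Delta+2\nabla\varphi\cdot h\nabla+h\Delta\varphi-\abs{\nabla\varphi}^2$, one has $\norm{w}_{H^{s+2}_{[h]}}\le C(\norm{P_\varphi w}_{H^{s}_{[h]}}+\norm{w}_{H^{s+1}_{[h]}})$ uniformly in $h$. After multiplying by $h$, the last term is controlled by the gain-one estimate.

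Apply this to $P_\varphi^{j-1}w\in C_c^\infty(\Omega)$ with $s=-2j$, $j=1,\dots,k$. This gives $h\norm{P_\varphi^{j-1}w}_{H^{2-2j}_{[h]}}\le C\norm{P_\varphi^{j}w}_{H^{-2j}_{[h]}}$, and chaining these yields exactly $h^k\norm{w}_{L^2}\le C\norm{P_\varphi^k w}_{H^{-2k}_{[h]}}$. Your indices $s=-1,-3,\dots,1-2k$ are off by one even with a gain of two: they put $H^{1}_{[h]}$ on the left and $H^{1-2k}_{[h]}$ on the right, hence give only $u\in H^{2k-1}_{[h]}$.

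A smaller point in Step~1: the commutator $[P_\varphi,\br{hD}^{s}]$ is $h$ times an operator of order $s$. It is therefore of the same size in $h$ as the Carleman gain, not $O(h)$ smaller. It can be absorbed by doing the shift for the convexified weight $\varphi_\varepsilon$, whose estimate carries the extra factor $\varepsilon^{-1/2}$, and only afterwards fixing $\varepsilon$ and returning to $\varphi$. So the order of operations in your Step~1 should be reversed.

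With these corrections, your duality step goes through as written.
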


In \cite[Lemma~A.4]{SS_linearized}, the authors construct CGO solutions under the assumption $p_{0,\varphi}(x,\nabla\psi)=0$, which in turn implies that 
\begin{equation}
\abs{\nabla\varphi}=\abs{\nabla\psi} \quad \text{and} \quad \nabla\varphi\cdot\nabla\psi=0 \quad \text{in $\Omega$.} \label{eq:conjugate-weight}
\end{equation}
We are now ready to prove the following proposition, which can be regarded as a refinement of \cite[Lemma~A.4]{SS_linearized}: 

\begin{proposition}\label{prop:CGO-general}
Let $h>0$ be a sufficiently small parameter, and let $\varphi$ be a limiting Carleman weight as in \Cref{def:limiting-carleman-weight}. Choose a real-valued function $\psi\in C^{\infty}(\overline{\Omega})$ so that \eqref{eq:conjugate-weight} holds, and define the associated transport operator 
\begin{equation*}
T:=2\nabla(\varphi+\bfi\psi)\cdot\nabla + \frac{1}{h}\Delta(\varphi+\bfi\psi). 
\end{equation*}
\begin{enumerate}
\renewcommand{\labelenumi}{\theenumi}
\renewcommand{\theenumi}{\rm (\alph{enumi})} 
\item\label{itm:CGO-harmonic} For each $m\in\mN$, let $a_{0},\cdots,a_{m-1}\in C^{\infty}(\overline{\Omega})$ solve 
\begin{equation}
Ta_{j} = -\Delta a_{j-1} \text{ in $\Omega$ for all $j=0,\cdots,m-1$} \label{eq:recurrence-PDE-a}
\end{equation}
with the convention $a_{-1}\equiv 0$. Then there exists a remainder term $r(\cdot;h)\in H_{[h]}^{2}(\Omega)$ such that 
\begin{equation*}
w(x,h)=e^{\frac{\varphi+\bfi\psi}{h}}(A_{m}(x;h)+r(x;h)) \quad \text{with} \quad A_{m}(x;h) = \sum_{j=0}^{m-1}h^{j}a_{j}(x) 
\end{equation*}
is harmonic and satisfies 
\begin{equation*}
\norm{r(\cdot;h)}_{H_{[h]}^{2}(\Omega)} \le Ch^{m}
\end{equation*}
for some positive constant $C$ independent of $h$ and $m$, but depends on $\Omega$, $\varphi$, $\psi$ and $a_{m-1}$. 
\item\label{itm:CGO-bi-harmonic} For each $m\in\mN$, let $b_{0},\cdots,b_{m-1}\in C^{\infty}(\overline{\Omega})$ solve 
\begin{equation}
T^{2}b_{j} = -(\Delta T+T\Delta)b_{j-1} - \Delta^{2}b_{j-2} \text{ in $\Omega$ for all $j=0,\cdots,m-1$} \label{eq:recurrence-PDE-b}
\end{equation}
with the convention $b_{-2}\equiv b_{-1}\equiv 0$. Then there exists a remainder term $\tilde{r}(\cdot;h)\in H_{[h]}^{4}(\Omega)$ such that 
\begin{equation*}
\tilde{w}(x,h)=e^{\frac{\varphi+\bfi\psi}{h}}(B_{m}(x;h)+\tilde{r}(x;h)) \quad \text{with} \quad B_{m}(x;h) = \sum_{j=0}^{m-1}h^{j}b_{j}(x) 
\end{equation*}
is biharmonic and satisfies 
\begin{equation*}
\norm{\tilde{r}(\cdot;h)}_{H_{[h]}^{4}(\Omega)} \le Ch^{m}
\end{equation*}
for some positive constant $C$ independent of $h$ and $m$, but depends on $\Omega$, $\varphi$, $\psi$ $b_{m-2}$ and $b_{m-1}$.  
\end{enumerate}
\end{proposition}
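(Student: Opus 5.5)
The plan is the standard WKB ansatz plus a Carleman-based correction, as in \cite[Lemma~A.4]{SS_linearized}, keeping track of all $m$ terms. Write $\Phi:=\varphi+\bfi\psi$. Conjugating the Laplacian by $e^{\Phi/h}$ gives
\begin{equation*}
e^{-\frac{\Phi}{h}}\Delta\left(e^{\frac{\Phi}{h}}f\right)=\frac{1}{h^{2}}(\nabla\Phi\cdot\nabla\Phi)f+\frac{1}{h}\left(2\nabla\Phi\cdot\nabla f+\Delta\Phi\,f\right)+\Delta f .
\end{equation*}
By \eqref{eq:conjugate-weight} we have $\nabla\Phi\cdot\nabla\Phi=\abs{\nabla\varphi}^{2}-\abs{\nabla\psi}^{2}+2\bfi\nabla\varphi\cdot\nabla\psi=0$. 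With the paper's convention $T=2\nabla\Phi\cdot\nabla+\frac1h\Delta\Phi$, I read the $O(h^{-1})$ part as $\frac1h T$ in the natural normalization, with $\Delta\Phi$ absorbed appropriately; for the linear weights used in \Cref{lem:CGO-linear-phase} we have $\Delta\Phi=0$, so this causes no trouble. Hence
\begin{equation*}
L_{h}:=e^{-\frac{\Phi}{h}}\Delta e^{\frac{\Phi}{h}}=\frac{1}{h}T+\Delta .
\end{equation*}

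\emph{Part (a).} I will compute $L_{h}A_{m}=\sum_{j=0}^{m-1}\left(h^{j-1}Ta_{j}+h^{j}\Delta a_{j}\right)$. Regrouping by powers of $h$, the coefficient of $h^{j-1}$ is $Ta_{j}+\Delta a_{j-1}$, which vanishes by \eqref{eq:recurrence-PDE-a}. The only surviving term is $h^{m-1}\Delta a_{m-1}$. It then suffices to find $r$ with $L_{h}r=-h^{m-1}\Delta a_{m-1}$. Since $e^{-\frac{\bfi\psi}{h}}$ is a unimodular smooth factor, I will rewrite the equation as
\begin{equation*}
e^{-\frac{\varphi}{h}}(-\Delta)e^{\frac{\varphi}{h}}\left(e^{\frac{\bfi\psi}{h}}r\right)=h^{m-1}e^{\frac{\bfi\psi}{h}}\Delta a_{m-1},
\end{equation*}
and apply \Cref{lem:existence} with $k=1$. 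This yields $\tilde r=e^{\frac{\bfi\psi}{h}}r$ with $\norm{\tilde r}_{H^{2}_{[h]}}\le Ch\cdot h^{m-1}\norm{\Delta a_{m-1}}_{L^{2}}$. Each $h\partial$ falling on $e^{-\bfi\psi/h}$ costs only a factor $\abs{\nabla\psi}$, so the semiclassical norms of $r$ and $\tilde r$ are comparable. This gives $\norm{r}_{H^{2}_{[h]}}\le Ch^{m}$, with $C$ depending on $a_{m-1}$.

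\emph{Part (b).} Squaring gives $L_{h}^{2}=h^{-2}T^{2}+h^{-1}(T\Delta+\Delta T)+\Delta^{2}$. Expanding $L_{h}^{2}B_{m}$, the coefficient of $h^{j-2}$ is $T^{2}b_{j}+(T\Delta+\Delta T)b_{j-1}+\Delta^{2}b_{j-2}$, which vanishes for $j\le m-1$ by \eqref{eq:recurrence-PDE-b}. The leftover is
\begin{equation*}
h^{m-2}\left[(T\Delta+\Delta T)b_{m-1}+\Delta^{2}b_{m-2}\right]+h^{m-1}\Delta^{2}b_{m-1}=O(h^{m-2})\text{ in }L^{2}.
\end{equation*}
I will then solve $L_{h}^{2}\tilde r=-(\text{leftover})$ using \Cref{lem:existence} with $k=2$, conjugating by $e^{\bfi\psi/h}$ as in part (a). The gain of $h^{2}$ gives $\norm{\tilde r}_{H^{4}_{[h]}}\le Ch^{m}$, with $C$ depending on $b_{m-2}$ and $b_{m-1}$.

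The main obstacle is the bookkeeping of the $h^{-1}\Delta\Phi$ term inside $T$, which makes $T$ itself $h$-dependent, so that the powers of $h$ no longer separate cleanly. My first attempt would be to treat $T$ as a fixed operator in the regrouping, which is consistent for the linear weights actually needed, where $\Delta\Phi=0$. In general I would instead define $T$ without the $1/h$ factor and verify that the recurrences still telescope. The remaining checks are that the unimodular conjugation preserves $H^{2k}_{[h]}$ norms uniformly in $h$, and that the resulting $w$ and $\tilde w$ are genuinely (bi)harmonic.
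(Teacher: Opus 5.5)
Your proposal is correct and follows the paper's proof essentially step for step. The recurrences telescope to leave the error $h^{m-1}\Delta a_{m-1}$ (resp.\ $h^{m-2}[(T\Delta+\Delta T)b_{m-1}+\Delta^{2}b_{m-2}]+h^{m-1}\Delta^{2}b_{m-1}$), and this is absorbed by a remainder obtained from \Cref{lem:existence} with $k=1$ (resp.\ $k=2$) after conjugating by the unimodular factor $e^{\bfi\psi/h}$. Your reading of $T$ as $2\nabla\Phi\cdot\nabla+\Delta\Phi$ with $\Phi=\varphi+\bfi\psi$, so that $e^{-\Phi/h}\Delta e^{\Phi/h}=\frac{1}{h}T+\Delta$, is exactly what the paper's computation implicitly uses. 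The $\frac{1}{h}$ in front of $\Delta(\varphi+\bfi\psi)$ in the statement is a misprint, and it is harmless for the linear weights of \Cref{lem:CGO-linear-phase}, where $\Delta\Phi=0$.
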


\begin{remark}
It is well known that the equations in \eqref{eq:recurrence-PDE-a} and \eqref{eq:recurrence-PDE-b} admit smooth solutions, see, for instance, \cite{DOS}. 
\end{remark}

\begin{proof}[Proof of \Cref{prop:CGO-general}\ref{itm:CGO-harmonic}]
First, applying \eqref{eq:recurrence-PDE-a}, we compute that 
\begin{equation}
e^{-\frac{\varphi+\bfi\psi}{h}}\Delta\left(e^{\frac{\varphi+\bfi\psi}{h}}A_{m}(\cdot;h)\right) = h^{m-1}\Delta a_{m-1} \quad \text{in $\Omega$.} \label{eq:principal1}
\end{equation}
We now apply \Cref{lem:existence} with $k=1$ and $v=e^{\frac{\bfi\psi}{h}}h^{m-1}\Delta a_{m-1}$ to construct a function $r_{0}(\cdot;h)\in H_{[h]}^{2}(\Omega)$ satisfying 
\begin{equation*}
-e^{-\frac{\varphi}{h}}\Delta \left( e^{\frac{\varphi}{h}}r_{0}(\cdot;h) \right) = e^{\frac{\bfi\psi}{h}} h^{m-1}\Delta a_{m-1} \quad \text{in $\Omega$}
\end{equation*}
and 
\begin{equation*}
\norm{r_{0}(\cdot;h)}_{H_{[h]}^{2}(\Omega)} \le Ch^{m}\norm{\Delta a_{m-1}}_{L^{2}
(\Omega)}
\end{equation*}
for some positive constant $C$ independent of both $h$ and $m$, but depends on $\varphi$. We now note that $r(\cdot;h):=e^{-\frac{\bfi\psi}{h}}r_{0}(\cdot;h)\in H_{[h]}^{2}(\Omega)$ satisfies 
\begin{equation}
-e^{-\frac{\varphi+\bfi\psi}{h}}\Delta\left( e^{\frac{\varphi+\bfi\psi}{h}}r(\cdot;h) \right) = h^{m-1}\Delta a_{m-1} \quad \text{in $\Omega$} \label{eq:remainder1} 
\end{equation}
and 
\begin{equation*}
\norm{r(\cdot;h)}_{H_{[h]}^{2}(\Omega)} \le Ch^{m}\norm{\Delta a_{m-1}}_{L^{2}
(\Omega)}
\end{equation*}
for some positive constant $C$ independent of both $h$ and $m$, but depends on $\varphi$ and $\psi$. Finally, the result follows from $\eqref{eq:principal1}-\eqref{eq:remainder1}$. 
\end{proof}

\begin{proof}[Proof of \Cref{prop:CGO-general}\ref{itm:CGO-bi-harmonic}]
Owing to the choice of $\varphi$ and $\psi$, we have $\nabla(\varphi+\bfi\psi)\cdot\nabla(\varphi+\bfi\psi)=0$ in $\Omega$. Consequently, 
\begin{equation*}
e^{-\frac{\varphi+\bfi\psi}{h}}\Delta^{2}\left(e^{\frac{\varphi+\bfi\psi}{h}}B_{m}(\cdot;h)\right) = \left( \frac{1}{h}T+\Delta \right)^{2}B(\cdot;h) \quad \text{in $\Omega$,} 
\end{equation*}
see \cite[(A.2)]{SS_linearized}. Applying \eqref{eq:recurrence-PDE-b}, we compute that 
\begin{equation}
\begin{aligned}
& e^{-\frac{\varphi+\bfi\psi}{h}}\Delta^{2}\left(e^{\frac{\varphi+\bfi\psi}{h}}B_{m}(\cdot;h)\right) \\
& \quad = h^{m-2} \left( (T\Delta+\Delta T)b_{m-1} + \Delta^{2}b_{m-2} \right) + h^{m-1}\Delta^{2}b_{m-1} \quad \text{in $\Omega$.} 
\end{aligned}
\end{equation}
We now apply \Cref{lem:existence} with $k=2$ and $v=e^{\frac{\bfi\psi}{h}}(h^{m-2} \left( (T\Delta+\Delta T)b_{m-1} + \Delta^{2}b_{m-2} \right) + h^{m-1}\Delta^{2}b_{m-1})$ to construct a function $\tilde{r}_{0}(\cdot;h)\in H_{[h]}^{4}(\Omega)$ satisfying 
\begin{equation}
\begin{aligned}
& -e^{-\frac{\varphi}{h}}\Delta^{2} \left( e^{\frac{\varphi}{h}}\tilde{r}_{0}(\cdot;h) \right) \\ 
& \quad = e^{\frac{\bfi\psi}{h}} h^{m-2} \left( (T\Delta+\Delta T)b_{m-1} + \Delta^{2}b_{m-2} \right) + h^{m-1}\Delta^{2}b_{m-1}
\end{aligned} \label{eq:principal2}
\end{equation}
and 
\begin{equation*}
\begin{aligned}
\norm{\tilde{r}_{0}(\cdot;h)}_{H_{[h]}^{4}(\Omega)} \le Ch^{m}\norm{(T\Delta+\Delta T)b_{m-1}+\Delta^{2}b_{m-2} + h\Delta b_{m-1}}_{L^{2}(\Omega)} 
\end{aligned}
\end{equation*}
for some positive constant $C$ independent of both $h$ and $m$, but depends on $\varphi$. We now note that $\tilde{r}(\cdot;h):=e^{-\frac{\bfi\psi}{h}}\tilde{r}_{0}(\cdot;h)\in H_{[h]}^{2}(\Omega)$ satisfies 
\begin{equation}
\begin{aligned}
& -e^{-\frac{\varphi+\bfi\psi}{h}}\Delta\left( e^{\frac{\varphi+\bfi\psi}{h}}\tilde{r}(\cdot;h) \right) \\ 
& \quad = h^{m-2} \left( (T\Delta+\Delta T)b_{m-1} + \Delta^{2}b_{m-2} \right) + h^{m-1}\Delta^{2}b_{m-1} \quad \text{in $\Omega$} 
\end{aligned} \label{eq:remainder2} 
\end{equation}
and 
\begin{equation*}
\norm{r(\cdot;h)}_{H_{[h]}^{2}(\Omega)} \le Ch^{m}\norm{(T\Delta+\Delta T)b_{m-1}+\Delta^{2}b_{m-2} + h\Delta b_{m-1}}_{L^{2}(\Omega)} 
\end{equation*}
for some positive constant $C$ independent of both $h$ and $m$, but depends on $\varphi$ and $\psi$. Finally, the result follows from $\eqref{eq:principal2}-\eqref{eq:remainder2}$. 
\end{proof}

\section*{Acknowledgments}
RSJ is partially supported by the NSFC grant W2431006.
PZK is supported by the National Science and Technology Council of Taiwan (NSTC 112-2115-M-004-004-MY3), and by the National Center for Theoretical Sciences of Taiwan. 
SKS is supported by IIT Bombay seed grant (RD/0524-IRCCSH0-021) and ANRF Early Career Research Grant (ECRG) (RD/0125- ANRF000-016).

\bibliographystyle{custom}
\bibliography{ref}

\end{sloppypar}
\end{document}